\newtheorem{theorem}{Theorem}
\newtheorem{proposition}[theorem]{Proposition}
\newtheorem{corollary}[theorem]{Corollary}
\newtheorem*{remark}{Remark}
\theoremstyle{definition}
\newtheorem{definition}[theorem]{Definition}
\DeclareRobustCommand\bigop[2][1]{%
  \mathop{\vphantom{\sum}\mathpalette\bigop@{{#1}{#2}}}\slimits@
}
\newcommand{\bigop@}[2]{\bigop@@#1#2}
\newcommand{\bigop@@}[3]{%
  \vcenter{%
    \sbox\z@{$#1\sum$}%
    \hbox{\resizebox{\ifx#1\displaystyle#2\fi\dimexpr\ht\z@+\dp\z@}{!}{$\m@th#3$}}%
  }%
}
\newcommand{\ee}{\mathrm{e}}
\begin{document}

\title[Ghost distributions of regular sequences are self-affine]{Ghost distributions of regular sequences are affine transformations of self-affine sets}

\subjclass[2010]{Primary 11B85; Secondary 28A80}

\keywords{automatic sequences, regular sequences, aperiodic order, symbolic dynamics, dilation equations, self-affine sets, continuous measures}

\author{Michael Coons}
\author{James Evans}
\author{Zachary Groth}
\author{Neil Ma\~{n}ibo}

\address{School of Information and Physical Sciences, 
   University of Newcastle, \newline
\hspace*{\parindent}University Drive, Callaghan NSW 2308, Australia}
\email{michael.coons@newcastle.edu.au, james.evans10@uon.edu.au,\newline \hspace*{\parindent}zachary.groth@uon.edu.au}

\address{Fakult\"at f\"ur Mathematik, Universit\"at Bielefeld, \newline
\hspace*{\parindent}Postfach 100131, 33501 Bielefeld, Germany}
\email{cmanibo@math.uni-bielefeld.de }

\date{\today}

\begin{abstract} Ghost measures of regular sequences---the unbounded  analogue of automatic sequences---are generalisations of standard fractal mass distributions. They were introduced to determine fractal (or self-similar) properties of regular sequences similar to those related to automatic sequences. The existence and continuity of ghost measures for a large class of regular sequences was recently given by Coons, Evans and Ma\~nibo. In this paper, we provide an explicit connection between fractals and regular sequences by showing that the graphs of ghost distributions---the distribution functions of ghost measures---of the above-mentioned class of regular sequences are sections of self-affine sets. As an application of our result, we show that the ghost distributions of the Zaremba sequences---regular sequences of the denominators of the convergents of badly approximable numbers---are all singular continuous. 
\end{abstract}

\maketitle

\section{Introduction}

Symbolic dynamics, fractal geometry, number theory and theoretical computer science share many connections, but arguably none so pervasive as their focus on understanding patterns, and specifically their desire to understand how relatively simple procedures---e.g., doubling, recurrence, iteration---can take the most mundane objects and produce rich structures with long range order. An interval becomes the Cantor set, two tiles arrange to give Penrose's tiling and a pair of simple functions lead to a strange attractor. While several notions and concepts have been introduced to describe the structure and symmetry of these limiting objects, the most successful of these has been the mathematical theory of measures and the related theory of (fractal) dimension.  

Central objects in this context, ubiquitous to each of the above-mentioned areas, are uniform substitutions---more commonly studied in arithmetic areas under the name `automatic sequences'. A sequence $f$ is \emph{$k$-automatic} provided there is a deterministic finite automaton that takes in the base-$k$ expansion of a positive integer $n$ and outputs the value $f(n)$. Automatic sequences can be described in many ways; the one most appropriate for our current purposes is via the $k$-kernel, $${\rm ker}_k ({f}):=\left\{(f(k^\ell n+r))_{n\geqslant 0}: \ell\geqslant 0, 0\leqslant r<k^\ell\right\}.$$ A sequence $f$ is $k$-automatic if and only if its $k$-kernel is finite \cite[Prop.~V.3.3]{E1974}. It is immediate that an automatic sequence takes only a finite number of values. A natural generalisation to sequences that can be unbounded was given in the early nineties by Allouche and Shallit \cite{AS1992}; an integer-valued sequence $f$ is called \emph{$k$-regular} if the $\mathbb{Q}$-vector space $\mathcal{V}_k(f):=\langle{\rm ker}_k ({f})\rangle_\mathbb{Q}$ generated by the $k$-kernel of $f$ is finite-dimensional over $\mathbb{Q}$. One nice property of this generalisation is that a bounded regular sequence is automatic. Additionally, the set of $k$-regular sequences has algebraic structure, it forms a ring under point-wise addition and convolution. 

Let $k\geqslant 2$ be an integer, $f$ be a $k$-regular sequence and let the set of integer sequences $\{f=f_1,f_2,\ldots,f_d\}\subseteq \mathcal{V}_k(f)$ be a basis for the $\mathbb{Q}$-vector space $\mathcal{V}_k(f)$. Set ${\bf f}(m)=(f_1(m),f_2(m),\ldots,f_d(m))$ and for each $a\in\{0,\ldots,k-1\}$ let ${\bf B}_a$ be the $d\times d$ integer matrix such that, for all $m\geqslant 0$, $${\bf f}(km+a)={\bf f}(m){\bf B}_a.$$ We call the ${\bf B}_a$ {\em digit matrices} and write $\mathcal{B}:=\{{\bf B}_0,{\bf B}_1,\ldots,{\bf B}_{k-1}\}$. Since the $d\times d$ digit matrices of a regular sequence are taken from a basis, the positive integer $d$ is minimal; we call a regular sequence {\em $d$-dimensional} provided its digit matrices are $d\times d$ matrices. See the seminal paper of Allouche and Shallit \cite{AS1992} and Nishioka's monograph \cite[Ch.~15]{N1996} for details on existence and the finer definitions. It follows from above that there is ${\bf w}\in\mathbb{Z}^{1\times d}$ such that for each $i\in\{1,\ldots,d\}$ and $n>0$, we have \begin{equation}\label{eq:linear}f_i(m)={\bf w}\, {\bf B}_{(m)_k} e_i={\bf w}\, {\bf B}_{i_s}\cdots{\bf B}_{i_1} {\bf B}_{i_0}\, e_i,\end{equation} where $e_i$ is the $i$th elementary column vector, $(m)_k=i_s\cdots i_1i_0$ is the base-$k$ expansion of $m$ and ${\bf B}_{(m)_k}:={\bf B}_{i_s}\cdots{\bf B}_{i_1} {\bf B}_{i_0}$. Set ${\bf B}:=\sum_{a=0}^{k-1}{\bf B}_a$. 

Recently, Coons, Evans and Ma\~nibo \cite{CEM}, in a first attempt to view $\mathcal{V}_k(f)$ dynamically, used an analogy between the matrix ${\bf B}$ and the substitution matrix of a substitution on a finite alphabet to prove the existence of a probability measure associated to $f$, called a {\em ghost measure}; see also \cite{CE2021,Epre}. They noted that, ``In this way, the existence of a natural measure associated to $f$ and $\mathcal{V}_k(f)$ provides a path to viewing the space $\mathcal{V}_k(f)$ of regular sequences associated to $f$ as a dynamical system and opens the possibility of considering these sequences and spaces as (fractal) geometric structures.'' In this paper, we show this possibility is reality, by proving that these ghost distributions---the distribution functions of ghost measures---are affine transformations of self-affine sets. 

In analogy with primitive substitutions, it is reasonable to restrict ourselves to the assumption that ${\bf B}$ is primitive. But, if this is the case for the $k$-regular sequence $f$, then we can consider $f$ as a $k^j$-regular sequence with $j$ being the smallest positive integer for which ${\bf B}^j$ is positive. Thus, in the context of $k$-regular sequences the distinction between primitivity and positivity is somewhat blurred. This is tacitly done for substitutions, where one normally chooses an appropriate power $j$ such that ${\bf M}_{\varrho}^{j}>0$, or equivalently, $\varrho^{j}(a)$ contains all the letters of the alphabet $\mathcal{A}$, for all $a\in \mathcal{A}$. Hence we make the following definition.

\begin{definition}\label{def:prim} We call a $k$-regular sequence $f$ {\em primitive} provided $f$ takes non-negative integer values, is not eventually zero, each of the $k$ digit matrices are non-negative and the matrix ${\bf B}$ is positive.
\end{definition}

To state our result formally, we introduce some notation. Set \begin{equation*}\label{eq:Sigmai}\varSigma(n):=\sum_{m=k^n}^{k^{n+1}-1}f(m)\quad\mbox{and}\quad
   \mu^{}_{n} \, := \, \frac{1}{\varSigma(n)} \sum_{m=0}^{k^{n+1}-k^n - 1}
   f(k^n + m) \, \delta^{}_{m / k^n(k-1)},
\end{equation*}
where $\delta_x$ denotes the unit Dirac measure at $x$. We can view
$(\mu^{}_{n})^{}_{n\in\mathbb{N}_0}$ as a sequence of probability measures on
the $1$-torus, the latter written as $\mathbb{T}=[0,1)$ with addition modulo
$1$. Here, we have simply re-interpreted the (normalised) values of 
the sequence $(f(m))_{m\geqslant 0}$ between $k^n$ and $k^{n+1}-1$ as the weights of a pure point
probability measure on $\mathbb{T}$ supported on the set $\big\{ {m}/{(k^n(k-1))} : 0 \leqslant m <
k^n(k-1) \big\}$. 

With the above notation, Coons, Evans and Ma\~nibo \cite[Thm.~2]{CEM} proved that if $f$ is a positive integer-valued $k$-regular sequence, such that the spectral radius of ${\bf B}$ is the unique dominant eigenvalue of ${\bf B}$, then the pure point probability measures $\mu_{n}$ converge weakly to a continuous probability measure $\mu_f$ on $\mathbb{T}$. The measure $\mu_f$ is called the {\em ghost measure} of $f$, and its distribution function is called the {\em ghost distribution} of $f$. 

As stated above, our main result connects the ghost distribution of a regular sequence to a self-affine set. Recall that an {\em affine contraction} $S:\mathbb{R}^n\to\mathbb{R}^n$ is a transformation of the form $S(x)=T(x)+b$, where $T$ is a linear contraction on $\mathbb{R}^n$ representable as an $n\times n$ matrix and $b\in\mathbb{R}^n$. A finite family of affine contractions $\mathcal{S}=\{S_1,\ldots,S_m\}$, with $m\geqslant 2$, is called an {\em iterated function system}, and the unique attractor (or invariant set) for an iterated function system of affine contractions is called a {\em self-affine set}; see Falconer \cite[Ch.~9]{Fbook} for details on iterated function systems over $\mathbb{R}^n$.

\begin{theorem}\label{thm:ghostselfaffine} If $f$ is a primitive $k$-regular sequence, then the graph of the ghost distribution of $f$ is an affine transformation of a self-affine set.
\end{theorem}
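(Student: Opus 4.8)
The plan is to turn the defining relation ${\bf f}(km+a)={\bf f}(m){\bf B}_a$ into an exact scaling symmetry of the ghost measure, to build from it a vector-valued refinable function whose graph is self-affine, and finally to recover the scalar ghost distribution as an affine image of that graph. Throughout, $\lambda$ denotes the dominant eigenvalue of ${\bf B}$ and ${\bf u}>0$ its right Perron eigenvector; the uniqueness of $\lambda$ is exactly the hypothesis under which \cite[Thm.~2]{CEM} yields the weak-$*$ convergence $\mu_n\to\mu_f$ and the continuity of $\mu_f$. \emph{Step 1 (scaling symmetry).} First I would compute the $\mu_f$-mass of a cylinder. Using $\varSigma(n)\asymp\lambda^n$ together with Perron--Frobenius applied to the tail sums of products of digit matrices, I would show that the mass assigned to those points $x\in[0,1)$ coming from integers $k^n+m$ whose base-$k$ expansion is $c\,b_1\cdots b_n$, with leading digit $c\in\{1,\dots,k-1\}$ and $b_1,\dots,b_n\in\{0,\dots,k-1\}$, is proportional to ${\bf w}\,{\bf B}_c\,{\bf B}_{b_1}\cdots{\bf B}_{b_n}\,{\bf u}$. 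This is the key computation, and it is where \eqref{eq:linear} and the primitivity of Definition~\ref{def:prim} enter. The leading digit plays a distinguished role because the positions $m/(k^n(k-1))$ force the most significant base-$k$ digit of $k^n+m$ into $\{1,\dots,k-1\}$, so that $[0,1)$ splits into the $k-1$ intervals $I_c=[\tfrac{c-1}{k-1},\tfrac{c}{k-1})$.

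\emph{Step 2 (the self-affine set).} On $[0,1)$ I would introduce the $\mathbb{R}^d$-valued distribution function ${\bf N}(t)$ of the \emph{free} vector measure whose cylinder masses are $\lambda^{-n}{\bf B}_{b_1}\cdots{\bf B}_{b_n}{\bf u}$, stripped of the leading-digit constraint. Prepending a digit $a$ then gives the refinement equation
\[
{\bf N}\bigl(\tfrac{t+a}{k}\bigr)\;=\;{\bf N}\bigl(\tfrac{a}{k}\bigr)+\lambda^{-1}{\bf B}_a\,{\bf N}(t),\qquad a=0,\dots,k-1,
\]
so that, by uniqueness of the invariant set, the graph $A=\{(t,{\bf N}(t)):t\in[0,1)\}\subset\mathbb{R}^{1+d}$ is the attractor of the iterated function system
\[
S_a(t,{\bf y})\;=\;\bigl(\tfrac{t+a}{k},\,{\bf N}(\tfrac{a}{k})+\lambda^{-1}{\bf B}_a\,{\bf y}\bigr),\qquad a=0,\dots,k-1,
\]
that is, a self-affine set, the base-$k$ maps $t\mapsto(t+a)/k$ satisfying the open set condition so that $A$ is genuinely the graph of the continuous function ${\bf N}$.

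\emph{Step 3 (recovering $F$), and the main obstacle.} Writing $x\in I_c$ as $x=\tfrac{c-1+t}{k-1}$, Step 1 gives
\[
F(x)\;=\;\mu_f\bigl([0,x)\bigr)\;=\;F\bigl(\tfrac{c-1}{k-1}\bigr)+\tfrac{1}{Z}\,{\bf w}\,{\bf B}_c\,{\bf N}\bigl((k-1)x-(c-1)\bigr),
\]
so on each $I_c$ the graph of the ghost distribution is the image of $A$ under the affine map $(t,{\bf y})\mapsto\bigl(\tfrac{c-1+t}{k-1},\,F(\tfrac{c-1}{k-1})+Z^{-1}{\bf w}{\bf B}_c\,{\bf y}\bigr)$; assembling the $k-1$ pieces (itself an affine operation, and reducing to a single affine map when $k=2$) exhibits the graph as an affine transformation of the self-affine set $A$, which proves Theorem~\ref{thm:ghostselfaffine}. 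The main obstacle is the self-affinity asserted in Step 2: the maps $S_a$ need not be contractions, since $\sum_a\lambda^{-1}{\bf B}_a=\lambda^{-1}{\bf B}$ has spectral radius exactly $1$. What is true, and what I would exploit, is that ${\bf B}$ being primitive (hence irreducible) gives $\rho({\bf B}_a)<\lambda$ for every $a$ by monotonicity of the Perron root, and that in the weighted norm $\|{\bf y}\|_{\bf u}=\max_i|y_i|/u_i$ adapted to ${\bf u}$ each $\lambda^{-1}{\bf B}_a$ is a non-expansion. The crux is to upgrade this to genuine eventual contraction, equivalently to show that the normalised digit matrices $\{\lambda^{-1}{\bf B}_a\}$ have joint spectral radius strictly less than $1$. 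This is precisely where primitivity is indispensable---one expects to pass to the base-$k^{j}$ presentation with ${\bf B}^{j}>0$ and argue that positivity of ${\bf B}^{j}=\sum_{w}{\bf B}_{w}$ forces long products to contract in $\|\cdot\|_{\bf u}$---and it is the same mechanism responsible for the continuity of $\mu_f$ in \cite{CEM}.
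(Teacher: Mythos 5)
Your proposal follows essentially the same route as the paper: your refinement equation for ${\bf N}$ is exactly the paper's dilation equation \eqref{dilation} restricted to $[0,1]$, your iterated function system is the paper's $\mathcal{S}_f$ from Theorem \ref{thm:dilationtoifs}, and your Step 3 reproduces the affine relation \eqref{eq:mufaffine} between the dilation-equation solution and the ghost distribution that the paper imports from \cite[Thm.~5]{CEM}. The one point you flag as the main obstacle---that the normalised digit matrices $\{\rho^{-1}{\bf B}_a\}$ have joint spectral radius strictly less than $1$---is in the paper handled as the standing hypothesis $\rho({\bf B})>\rho^{*}(\mathcal{B})$ of Theorems \ref{thm:dilationtoifs} and \ref{main} (which guarantees existence, uniqueness and H\"older continuity of ${\bf F}$ via Daubechies--Lagarias) rather than being derived from primitivity, so your sketched deduction of it from positivity of ${\bf B}^{j}$ is additional work the paper does not itself carry out.
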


This paper is organised as follows. In the next section, with the proof of Theorem \ref{thm:ghostselfaffine} as a goal, we provide an explicit relationship between the solution of a dilation equation and the attractor of an iterated function system of affine contractions (Theorem \ref{thm:dilationtoifs}). In that section, we use a classical geometric construction of a singular continuous function due to Salem \cite{S1943} as a motivating example. Section \ref{sec:relations} concludes with a general result (Theorem \ref{main}) that implies Theorem \ref{thm:ghostselfaffine}. In Section \ref{sec:1d}, we consider `Salem sequences' as generalisations of standard missing digit sequences and provide a complete characterisation of their ghost measures; these sequences can be viewed as weighted generalisations of the standard middle-thirds Cantor set---their ghost distributions are generalisations of the Devil's staircase. In Section \ref{sec:zaremba}, we apply our results to so-called `Zaremba sequences', the $2$-dimensional $k$-regular sequences that encode the denominators of badly approximable numbers with $k$ given partial quotients. In particular, we show that any nontrivial Zaremba ghost measure is singular continuous with respect to Lebesgue measure; that is, its distribution function has almost everywhere zero derivative. Finally, in the Concluding Remarks, we consider some subtleties of our approach and contrast a bit the Salem versus Zaremba sequences.

\section{Dilation equations, self-affine sets and ghost distributions}\label{sec:relations}

In this section, we will prove Theorem \ref{thm:ghostselfaffine}, by providing a more general result (Theorem \ref{main}). To do this we provide an explicit relationship between the solution of a dilation equation and the attractor of an iterated function system of affine contractions. Since the growth properties of regular sequences are related to the norms of their digit matrices, before moving on to these tasks, we define the joint spectral radius of a finite set of matrices. 

Throughout this paper, we let $\rho({\bf M})$ denote the spectral radius of the matrix ${\bf M}$ and denote the {\em joint spectral radius} of a finite set of matrices $\mathcal{M}:=\{{\bf M}_1,{\bf M}_2,\ldots, {\bf M}_{\ell}\}$ by $$\rho^*(\mathcal{M})=\limsup_{n\to\infty}\max_{1\leqslant i_0,i_1,\ldots,i_{n-1}\leqslant \ell}\left\| {\bf M}_{i_0}{\bf M}_{i_1}\cdots{\bf M}_{i_{n-1}}\right\|^{1/n},$$
where $\|\cdot\|$ is any submultiplicative matrix norm---here, we will exclusively use the standard operator norm. This quantity was introduced by Rota and Strang \cite{RS1960} and has a wide range of applications. For an extensive treatment, see Jungers \cite{J2009}.

Throughout this paper, we assume that the spectral radius $\rho=\rho({\bf B})$ of ${\bf B}$ is the unique dominant eigenvalue of ${\bf B}$ (by `unique' we implicitly mean `simple' as well); for discussion of a more general treatment see Coons, Evans and Ma\~nibo \cite[Sec.~3]{CEM}. Let ${\bf v}_\rho$ is the $d\times 1$ eigenvector associated to~$\rho$. 
We define the vector-valued function ${\bf F}:\mathbb{R}\to\mathbb{R}^{d}$ by \begin{equation}\label{dilation}{\bf F}(x) = \sum_{a=0}^{k-1}\rho^{-1}\, {\bf B}_{a} \cdot {\bf F}(k x-a),\quad\mbox{where}\quad{\bf F}(x)=\begin{cases}{\bf 0} & \text{ for } x \leqslant 0\\ {\bf v}_{\rho} &\text{ for } x \geqslant 1.\end{cases}\end{equation} Functional equations such as \eqref{dilation} are known as \emph{dilation equations} or \emph{two-scale difference equations} in the literature; seminal work on these was done by Daubechies and Lagarias \cite{DL1991,DL1992} and their relationship to regular sequences was developed and highlighted by Dumas \cite{D2013,D2014}.  The function ${\bf F}$ exists and is unique if $\rho({\bf B})>\rho^*(\mathcal{B})$. Moreover, the function ${\bf F}$ is H\"older continuous with exponent $\alpha$ for any $\alpha<\log_k(\rho/\rho^*).$

\begin{theorem}\label{thm:dilationtoifs} Let $k\geqslant 2$ be an integer and $f$ be a $k$-regular sequence such that the spectral radius $\rho=\rho({\bf B})$ is the unique dominant eigenvalue of ${\bf B}$ and that $\rho({\bf B})>\rho^*(\mathcal{B}).$ Let ${\bf F}(x)$ be the solution of the dilation equation \eqref{dilation}. Then the graph $$\mathcal{F}_f:=\{(x,{\bf F}(x)):x\in[0,1]\}\subset[0,1]^{d+1}$$ of ${\bf F}(x)$ is a self-affine set. In particular, $\mathcal{F}_f$ is the attractor of the iterated function system $\mathcal{S}_f=\{S_0,\ldots,S_{k-1}\}:[0,1]^{d+1}\to[0,1]^{d+1}$ where, for $j\in\{0,1,\ldots,k-1\}$, we have $$S_j\left(\begin{matrix}y_0\\ \vdots\\ y_{d}\end{matrix}\right)=\left(\begin{matrix} 1/k & {\bf 0}^{1\times (k-1)}\\ {\bf 0}^{(k-1)\times 1} & \rho^{-1}\, {\bf B}_j\end{matrix}\right)\left(\begin{matrix}y_0\\ \vdots\\ y_{d}\end{matrix}\right)+\left(\begin{matrix}j/k\\  \sum_{a=0}^{j-1}\rho^{-1}\, {\bf B}_a{\bf v}_\rho\end{matrix}\right).$$
\end{theorem}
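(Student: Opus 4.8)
The plan is to identify $\mathcal{F}_f$ as the unique attractor of $\mathcal{S}_f$ via the two standard ingredients of Hutchinson's theory: first, that $\mathcal{S}_f$ is a contractive iterated function system, so that a unique nonempty compact invariant set exists; and second, that the graph $\mathcal{F}_f$ itself satisfies the invariance relation $\mathcal{F}_f=\bigcup_{j=0}^{k-1}S_j(\mathcal{F}_f)$. Uniqueness of the invariant set then forces $\mathcal{F}_f$ to be the attractor, hence self-affine. Since $\rho>\rho^*(\mathcal{B})$ guarantees that ${\bf F}$ exists and is (H\"older) continuous on $[0,1]$, the graph $\mathcal{F}_f$ is automatically a compact subset of $[0,1]^{d+1}$, so the substantive work is the contraction property and the invariance relation.

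For the contraction property, I would observe that each linear part $T_j$ is block-diagonal with blocks $1/k$ and $\rho^{-1}{\bf B}_j$, so the finite family $\{T_0,\ldots,T_{k-1}\}$ has joint spectral radius $\max\{1/k,\,\rho^{-1}\rho^*(\mathcal{B})\}$, which is strictly less than $1$ precisely because $k\geqslant 2$ and $\rho>\rho^*(\mathcal{B})$. The subtlety here is that the individual maps $\rho^{-1}{\bf B}_j$ need not be contractions in the Euclidean norm; however, by the theorem of Rota and Strang \cite{RS1960}, the joint spectral radius equals the infimum over all submultiplicative norms of the maximal operator norm, so a joint spectral radius below $1$ yields a norm on $\mathbb{R}^{d+1}$ under which every $S_j$ is a genuine contraction. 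Applying Hutchinson's theorem in the complete metric space of nonempty compact subsets of $[0,1]^{d+1}$ with the Hausdorff metric (whose completeness is unaffected by passing to an equivalent norm) then produces a unique attractor.

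The heart of the argument is the invariance relation, which I would establish by showing that $S_j(\mathcal{F}_f)$ is exactly the portion of the graph over $[j/k,(j+1)/k]$. Fix $x\in[0,1]$ and set $x'=(x+j)/k$, so that the first coordinate of $S_j(x,{\bf F}(x))$ equals $x'$ and $kx'-a=x+(j-a)$. Evaluating the dilation equation \eqref{dilation} at $x'$ and invoking the boundary data, the terms with $a<j$ contribute $\rho^{-1}{\bf B}_a{\bf v}_\rho$ (since $kx'-a\geqslant 1$), the term $a=j$ contributes $\rho^{-1}{\bf B}_j{\bf F}(x)$ (since $kx'-j=x\in[0,1]$), and the terms with $a>j$ vanish (since $kx'-a\leqslant 0$). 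The surviving sum is precisely the lower block of $S_j(x,{\bf F}(x))$, so $S_j(x,{\bf F}(x))=(x',{\bf F}(x'))$. As $x$ runs over $[0,1]$, $x'$ runs over $[j/k,(j+1)/k]$, and the images match across consecutive intervals, since $S_j(1,{\bf v}_\rho)=S_{j+1}(0,{\bf 0})$ using ${\bf F}(0)={\bf 0}$ and ${\bf F}(1)={\bf v}_\rho$. Taking the union over $j$ then gives $\bigcup_{j}S_j(\mathcal{F}_f)=\mathcal{F}_f$.

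With both ingredients in place, the compact set $\mathcal{F}_f$ is invariant under $\mathcal{S}_f$ and therefore coincides with the unique attractor, so $\mathcal{F}_f$ is self-affine. I expect the main obstacle to lie not in the clean case analysis for the dilation equation, but in the legitimacy of treating $\mathcal{S}_f$ as a contractive system at all: the raw maps $S_j$ are generally not Euclidean contractions, and one must lean on the joint spectral radius hypothesis $\rho>\rho^*(\mathcal{B})$ to produce an adapted norm before Hutchinson's machinery applies.
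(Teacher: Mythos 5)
Your proposal is correct, and its core computation---evaluating the dilation equation at $x'=(x+j)/k$ and splitting the sum into the ranges $a<j$ (where $kx'-a\geqslant 1$ forces the term ${\bf F}(kx'-a)={\bf v}_\rho$), $a=j$, and $a>j$ (where the terms vanish)---is exactly the argument the paper gives, modelled there on the Salem special case of Proposition~\ref{salem}. Where you go beyond the paper is in supplying the Hutchinson-theoretic scaffolding that the published proof leaves implicit: the paper only verifies $\mathcal{S}_f(\mathcal{F}_f)\subseteq\mathcal{F}_f$ and dismisses the reverse inclusion with ``the above equalities in the reverse direction,'' and it never addresses why $\mathcal{S}_f$ is a contractive system in the first place, which is needed before one may speak of \emph{the} attractor. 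Your observation that the linear parts form a family with joint spectral radius $\max\{1/k,\rho^{-1}\rho^*(\mathcal{B})\}<1$, so that by Rota--Strang there is an adapted norm in which every $S_j$ is a genuine contraction, is a real and worthwhile addition, since the individual maps $\rho^{-1}{\bf B}_j$ need not contract in the Euclidean norm. Likewise, your remark that $x\mapsto (x+j)/k$ maps $[0,1]$ onto $[j/k,(j+1)/k]$, so the union of the images covers the whole graph, is the cleanest way to get the equality $\bigcup_j S_j(\mathcal{F}_f)=\mathcal{F}_f$ rather than mere containment. In short: same proof at its heart, but yours is the more complete write-up.
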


\begin{remark} While it is certainly known that one can iteratively construct solutions of dilation equations, to the best of our knowledge, until now a general result like Theorem \ref{thm:dilationtoifs} has not appeared in the literature. Note, also, that Theorem \ref{thm:dilationtoifs} is a two-way correspondence, providing a way to go back and forth between iterated function systems of contractions and solutions of dilation equations.
\end{remark}

Before proving Theorem \ref{thm:dilationtoifs}, and to entice a little why one may be interested in such a relationship, we give an example of a regular sequence, and its related dilation equation which has an explicit connection to a self-affine set considered by Salem. 

In the early 1940s, Salem \cite{S1943} gave a geometric construction of (the graphs of) a family of strictly increasing singular continuous functions from $[0,1]$ to $[0,1]$. Recall that a function is singular continuous provided it has almost everywhere zero derivative. Here we consider Salem's example, with parameter $\lambda_0=2/5$. In this case, we denote Salem's self-affine set by $\mathcal{A}_s$, which is the unique attractor of the iterated function system $\mathcal{S}_s=\{S_0,S_1\}:[0,1]^2\to[0,1]^2$, where $$S_0\left(\begin{matrix} x\\ y\end{matrix}\right)=\left(\begin{matrix} 1/2 & 0\\ 0 &2/5\end{matrix}\right)\left(\begin{matrix} x\\ y\end{matrix}\right)\quad\mbox{and}\quad S_1\left(\begin{matrix} x\\ y\end{matrix}\right)=\left(\begin{matrix} 1/2 & 0\\ 0 &3/5\end{matrix}\right)\left(\begin{matrix} x\\ y\end{matrix}\right)+\left(\begin{matrix} 1/2 \\ 2/5\end{matrix}\right).$$ Figure \ref{fig:Salem} shows how the attractor $\mathcal{A}_s$ is developed by iterating the system $\mathcal{S}_s$ starting with the line segment joining $(0,0)$ and $(1,1)$ as a seed.

\begin{figure}[ht]
\begin{center}
  \includegraphics[width=.32\linewidth]{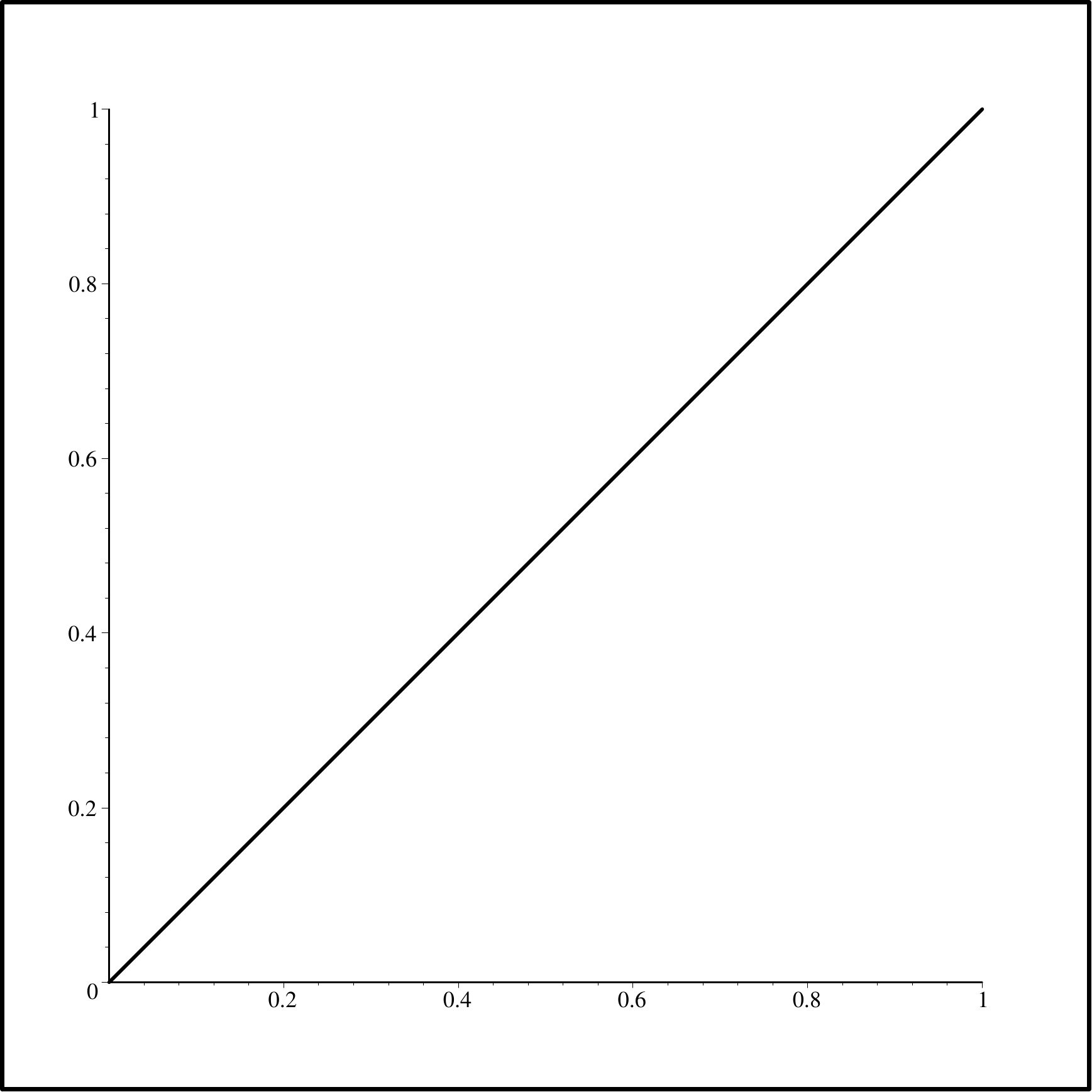}
  \includegraphics[width=.32\linewidth]{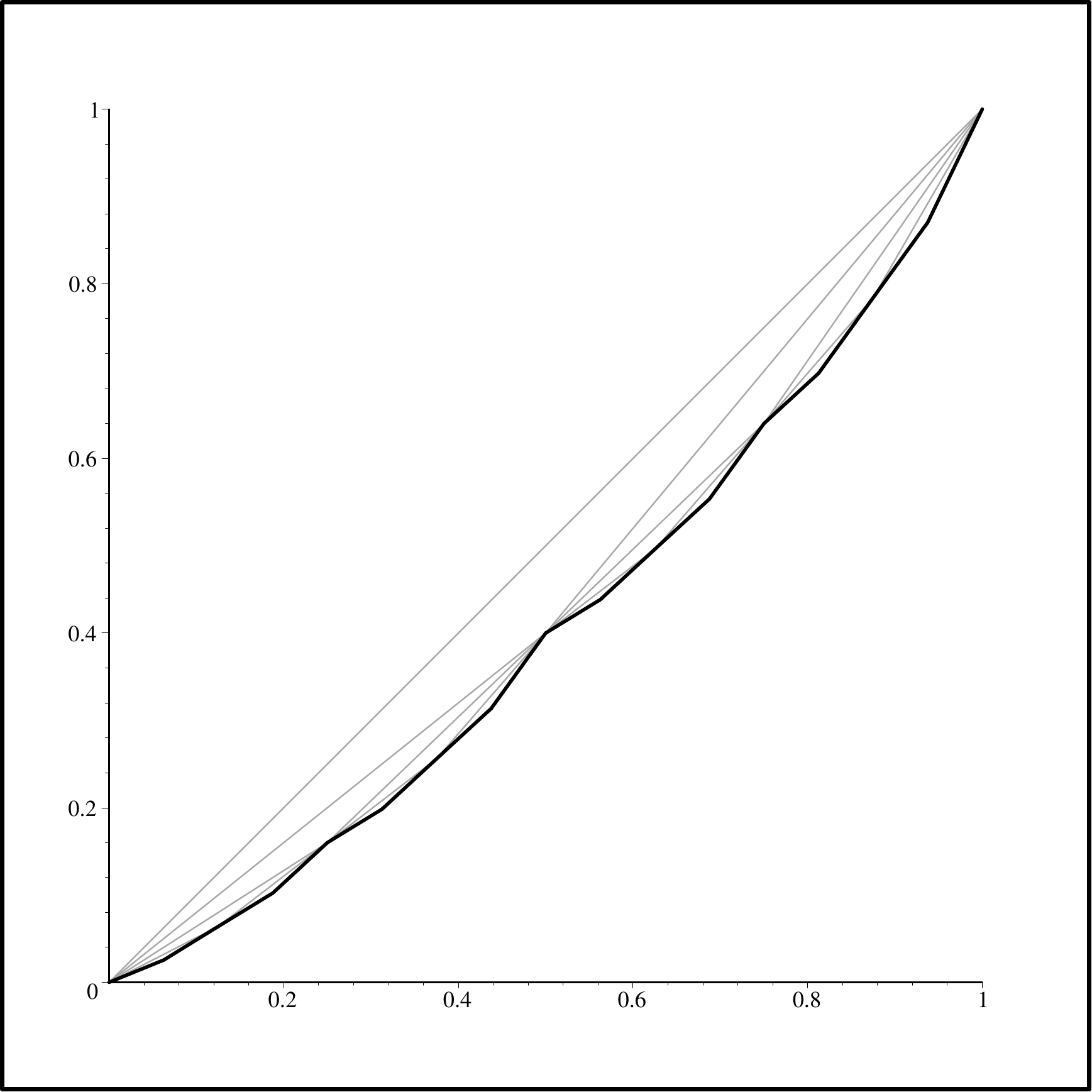} 
  \includegraphics[width=.32\linewidth]{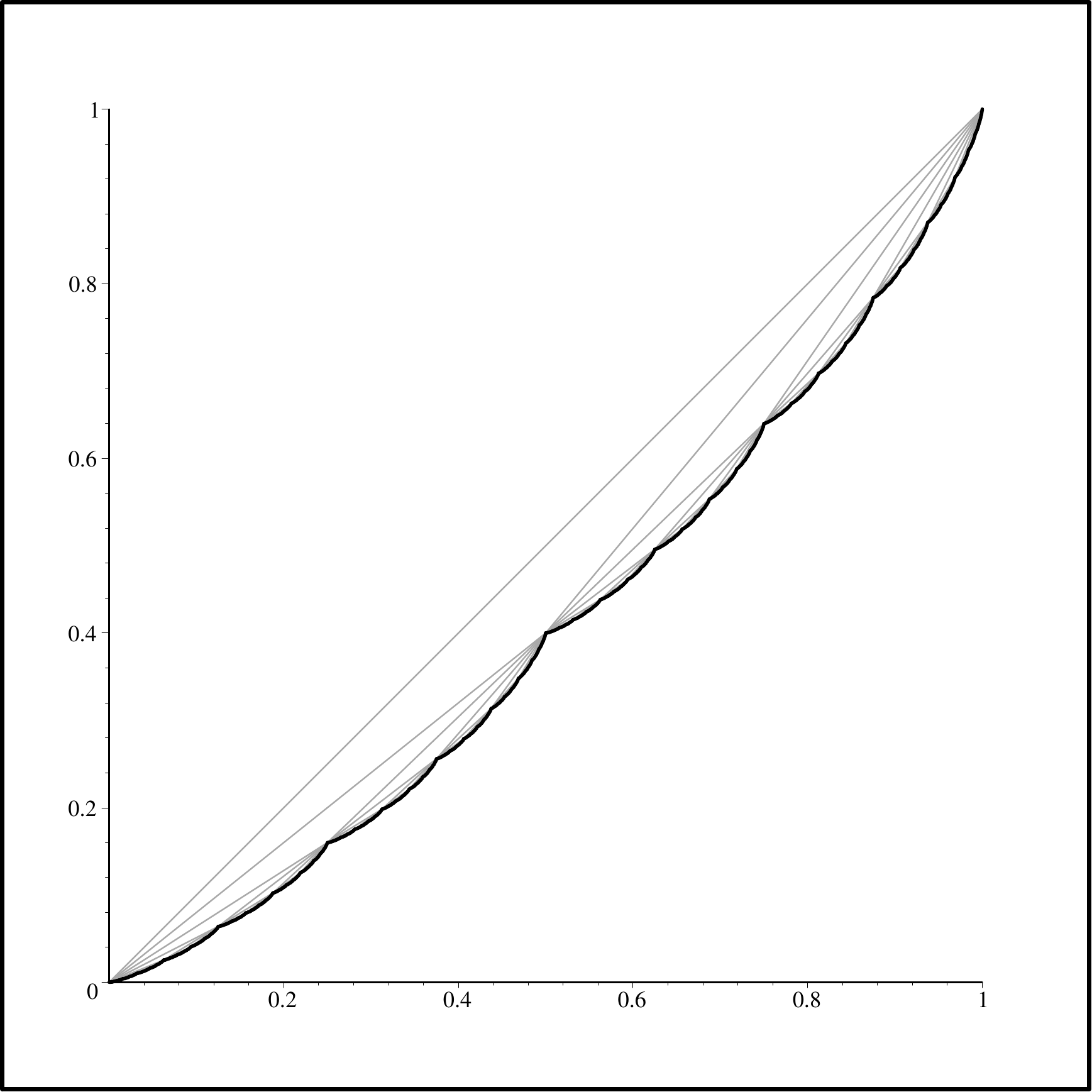}  
  \end{center}
\caption{\small The line segment from $(0,0)$ to $(1,1)$ after applying $\mathcal{S}_{s}$, Salem's iterated function system, $0$, $4$ and $10$ times, respectively.}
\label{fig:Salem}
\end{figure}

\noindent In the following proposition, we show that we can interpret the attractor $\mathcal{A}_s$ as the restriction to $[0,1]$ of the graph of the function $F_s:\mathbb{R}\to\mathbb{R}$ satisfying \begin{equation}\label{eq:Fsalem}F_s(x)=\frac{2}{5}\cdot F_s(2x)+\frac{3}{5}\cdot F_s(2x-1),\quad\mbox{where}\quad F_s(x)=\begin{cases} 0 & x\leqslant 0\\ 1 & x\geqslant 1\end{cases}. \end{equation}

\begin{proposition}\label{salem} Let $F_s$ be the unique solution to the dilation equation \eqref{eq:Fsalem}. The set $\big\{(x,F_s(x)):x\in[0,1]\big\}$ is the attractor $\mathcal{A}_s$ of Salem's iterated function system $\mathcal{S}_s$. 
\end{proposition}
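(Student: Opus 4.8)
The plan is to exploit the defining property of the attractor: by the fundamental theorem on iterated function systems (Falconer \cite[Ch.~9]{Fbook}), $\mathcal{A}_s$ is the \emph{unique} nonempty compact subset $K\subseteq[0,1]^2$ satisfying the self-similarity relation $K=S_0(K)\cup S_1(K)$. It therefore suffices to show that the graph $G:=\{(x,F_s(x)):x\in[0,1]\}$ is nonempty and compact and that it is invariant under Salem's system, that is, $G=S_0(G)\cup S_1(G)$. This mirrors, in the simplest one-dimensional case, exactly the strategy that Theorem \ref{thm:dilationtoifs} will carry out in general, and it recovers the picture in Figure \ref{fig:Salem}, where iterating $\mathcal{S}_s$ from the diagonal seed visibly converges to the graph.

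First I would record that $F_s$ is a genuine continuous function on $[0,1]$. Here the scalars $B_0,B_1$ play the role of the digit matrices, with $\rho=B_0+B_1$ and $\rho^{-1}B_0=2/5$, $\rho^{-1}B_1=3/5$; since the joint spectral radius of a finite set of scalars is $\rho^{*}=\max(B_0,B_1)=(3/5)\rho<\rho$, the existence and regularity statement following \eqref{dilation} guarantees that $F_s$ exists, is unique, and is (Hölder) continuous. Continuity makes $G$ the continuous image of the compact interval $[0,1]$, hence compact, and it is plainly nonempty.

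The heart of the argument is the invariance identity, and the key idea is to split $[0,1]$ at the midpoint and use the boundary conditions of \eqref{eq:Fsalem} to collapse one summand on each half. Substituting $u=x/2$ gives $S_0(G)=\{(u,(2/5)F_s(2u)):u\in[0,1/2]\}$; for $u\in[0,1/2]$ one has $2u-1\leqslant 0$, so $F_s(2u-1)=0$ and \eqref{eq:Fsalem} yields $(2/5)F_s(2u)=F_s(u)$. Hence $S_0(G)$ is exactly the part of $G$ lying over $[0,1/2]$. Symmetrically, substituting $u=x/2+1/2$ gives $S_1(G)=\{(u,(3/5)F_s(2u-1)+2/5):u\in[1/2,1]\}$; for $u\in[1/2,1]$ one has $2u\geqslant 1$, so $F_s(2u)=1$ and \eqref{eq:Fsalem} yields $(3/5)F_s(2u-1)+2/5=F_s(u)$, so that $S_1(G)$ is the part of $G$ over $[1/2,1]$. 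Taking the union recovers all of $G$, whence $G=S_0(G)\cup S_1(G)$ and uniqueness gives $G=\mathcal{A}_s$.

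The one point requiring care---a check rather than a genuine obstacle---is the behaviour at the shared endpoint $u=1/2$, where the two pieces must agree so that the union is a graph with no gap or double-valued point. Evaluating \eqref{eq:Fsalem} at $x=1/2$ gives $F_s(1/2)=2/5$, and indeed both $S_0$ (via $x=1$) and $S_1$ (via $x=0$) send their respective endpoints to the single point $(1/2,2/5)$, so the images overlap consistently. This midpoint compatibility is precisely what the translation normalisation in \eqref{dilation}---here $\sum_{a=0}^{0}\rho^{-1}B_a{\bf v}_\rho=2/5$---is engineered to supply, and it is exactly this bookkeeping that the general Theorem \ref{thm:dilationtoifs} must reproduce across all $k$ digits and $d$ coordinates.
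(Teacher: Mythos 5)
Your proof is correct and follows essentially the same route as the paper's: apply $S_0$ and $S_1$ to points of the graph, use the boundary conditions of \eqref{eq:Fsalem} to collapse the off-range summand on each half of $[0,1]$, and invoke uniqueness of the attractor. You are in fact slightly more complete than the paper, which leaves the compactness of the graph implicit and dispatches the reverse inclusion with ``the above equalities backwards,'' whereas your identification of $S_0(G)$ and $S_1(G)$ as exactly the portions of $G$ over $[0,1/2]$ and $[1/2,1]$ gives both inclusions at once.
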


\begin{proof} We write $\mathcal{F}_s:=\{(x,F_s(x)):x\in[0,1]\big\}$, and want to show that $\mathcal{S}(\mathcal{F}_s)=\mathcal{F}_s$. As a first step, we note that $(0,0)\in\mathcal{F}_s$ so that this set is not trivially empty. Second, we take $(x)_2=.x_1x_2\cdots\in[0,1]$ and consider $(x,F_s(x))$. Now, \begin{multline*}\qquad S_0\left(\begin{matrix} x\\ F_s(x)\end{matrix}\right)=\left(\begin{matrix} 1/2 & 0\\ 0 &2/5\end{matrix}\right)\left(\begin{matrix} x\\ F_s(x)\end{matrix}\right)\\ =\left(\begin{matrix} x/2\\ (2/5)F_s(x)\end{matrix}\right)=\left(\begin{matrix} .0x_1x_2\cdots\\ (2/5)F_s(x)\end{matrix}\right)=\left(\begin{matrix} y\\ (2/5)F_s(2y)\end{matrix}\right),\qquad \end{multline*} where $y:=.0x_1x_2\cdots$, so $x=2y$. Note that $y\in[0,1/2]$. Thus $2y-1\leqslant 0$, so \eqref{eq:Fsalem} gives that $$\frac{2}{5}\cdot F_s(2y)=\frac{2}{5}\cdot F_s(2y)+\frac{3}{5}\cdot F_s(2y-1)=F_s(y),$$ and so $S_0(\mathcal{F}_s)\subseteq \mathcal{F}_s$. Similarly, \begin{multline*} \qquad S_1\left(\begin{matrix} x\\ F_s(x)\end{matrix}\right)=\left(\begin{matrix} 1/2 & 0\\ 0 &3/5\end{matrix}\right)\left(\begin{matrix} x\\ F_s(x)\end{matrix}\right)+\left(\begin{matrix} 1/2 \\ 2/5\end{matrix}\right)\\ =\left(\begin{matrix} x/2+1/2\\ (3/5)F_s(x)+2/5\end{matrix}\right)=\left(\begin{matrix} .1x_1x_2\cdots\\ (3/5)F_s(x)+2/5\end{matrix}\right)\\ =\left(\begin{matrix} w\\ (3/5)F_s(2w-1)+2/5\end{matrix}\right),\qquad \end{multline*} where $w:=.1x_1x_2\cdots$, so $x=2w-1$. Note that $w\in[1/2,1]$. Thus $2w\geqslant 1$, so \eqref{eq:Fsalem} gives that $$\frac{2}{5}+\frac{3}{5}\cdot F_s(2w-1)=\frac{2}{5}\cdot F_s(2w)+\frac{3}{5}\cdot F_s(2w-1)=F_s(w),$$ and so $S_1(\mathcal{F}_s)\subseteq \mathcal{F}_s$, thus $\mathcal{S}_s(\mathcal{F}_s)\subseteq \mathcal{F}_s$. To complete the proof, we need only note that the reverse inclusion follows by using the above equalities backwards.
\end{proof}

The proof of the general result follows {\em mutatis mutandis}.

\begin{proof}[Proof of Theorem \ref{thm:dilationtoifs}] Our proof follows the method of the special case recorded in Proposition~\ref{salem} above. To this end, let $(x)_k=.x_1x_2\cdots\in[0,1]$ and consider $(x,{\bf F}(x))$. For $j\in\{0,1\ldots,k-1\}$, we have \begin{align*}S_j\left(\begin{matrix}x\\ {\bf F}(x)\end{matrix}\right) &=\left(\begin{matrix} 1/k & {\bf 0}^{1\times (k-1)}\\ {\bf 0}^{(k-1)\times 1} & \rho^{-1}\, {\bf B}_j\end{matrix}\right)\left(\begin{matrix}x\\ {\bf F}(x)\end{matrix}\right)+\left(\begin{matrix}j/k\\  \sum_{a=0}^{j-1}\rho^{-1}\, {\bf B}_a{\bf v}_\rho\end{matrix}\right)\\
&=\left(\begin{matrix} x/k+j/k\\ \rho^{-1}\, {\bf B}_j{\bf F}(x)+\sum_{a=0}^{j-1}\rho^{-1}\, {\bf B}_a{\bf v}_\rho\end{matrix}\right)\\
&=\left(\begin{matrix} .jx_1x_2\cdots\\ \rho^{-1}\, {\bf B}_j{\bf F}(k(.jx_1x_2\cdots)-j)+\sum_{a=0}^{j-1}\rho^{-1}\, {\bf B}_a{\bf v}_\rho\end{matrix}\right).
\end{align*} To finish the proof, it is enough to show that \begin{equation}\label{fjxV}{\bf F}(.jx_1x_2\cdots)= \rho^{-1}\, {\bf B}_j{\bf F}(k(.jx_1x_2\cdots)-j)+\sum_{a=0}^{j-1}\rho^{-1}\, {\bf B}_a{\bf v}_\rho.\end{equation} We compute ${\bf F}(.jx_1x_2\cdots)$ using \eqref{dilation}. Since $y:=.jx_1x_2\cdots\in[0,1]$, we have that $ky-a\geqslant 1$ for $a\in\{0,\ldots,j-1\}$, and $ky-a\leqslant 0$ for $a\in\{j+1,\ldots,k-1\}$. Thus, for this $y$, using the definition of ${\bf F}(x)$ outside of $(0,1)$, we have ${\bf F}(ky-a)={\bf F}(1)={\bf v}_\rho,$ for $a\in\{0,\ldots,j-1\}$, and ${\bf F}(ky-a)={\bf 0},$ for $a\in\{j+1,\ldots,k-1\}$. Thus \begin{align*} {\bf F}(y)&=\sum_{a=0}^{k-1}\rho^{-1}\, {\bf B}_{a} {\bf F}(k y-a)\\
&=\sum_{a=j+1}^{k-1}\rho^{-1}\, {\bf B}_{a} {\bf F}(k y-a)+\rho^{-1}\, {\bf B}_{j} {\bf F}(k y-j)+\sum_{a=0}^{j-1}\rho^{-1}\, {\bf B}_{a}  {\bf F}(k y-a)\\
&={\bf 0}+\rho^{-1}\, {\bf B}_{j} {\bf F}(k y-j)+\sum_{a=0}^{j-1}\rho^{-1}\, {\bf B}_{a}  {\bf F}(1)\\ &=\rho^{-1}\, {\bf B}_{j} {\bf F}(k y-j)+\sum_{a=0}^{j-1}\rho^{-1}\, {\bf B}_{a}  {\bf v}_\rho,
\end{align*} which shows that $\mathcal{S}_f(\mathcal{F}_f)\subseteq \mathcal{F}_f$. Again, as in the proof of Proposition~\ref{salem}, the reverse inclusion follows using the above equalities in the reverse direction.
\end{proof}

In our context, one can make an interesting addition to the above attractor-dilation equation relationship---a regular sequence associated to the digit matrices. We can do this with the above example of Salem \cite{S1943} as follows.

\begin{definition} Let $k\geqslant 2$ be a positive integer. We call a $1$-dimensional nonnegative regular sequence $f$ a {\em Salem sequence} and refer to the digit matrices of a Salem sequence simply as {\em digits}.
\end{definition}

\noindent For example, the Salem attractor $\mathcal{A}_s$ is related to the $2$-regular Salem sequence $s=(s(n))_{n\geqslant 0}$ given by $s(n)=2^{s_0(n)}3^{s_1(n)},$ where $s_0(n)$ and $s_1(n)$ counts the number of zeros and ones, respectively, in the binary expansion of $n$---the careful reader will notice the use of the subscript $s$ in the above discussion of Salem's iterated function system. Here, the ghost distribution of $s$ is exactly the solution, $F_s$, of the dilation equation \eqref{eq:Fsalem}. In general, the relationship between the ghost distribution and the solution of the dilation equation is slightly more complicated. A recent result, recorded below, makes this relationship between the regular sequence, the dilation equation and the attractor explicit; see Theorem 5 and its proof in Coons, Evans and Ma\~nibo~\cite{CEM}.

\begin{theorem}[Coons, Evans and Ma\~nibo]\label{thm:cem} Let $k\geqslant 2$ be an integer and $f$ be a $k$-regular sequence. Suppose that the spectral radius $\rho({\bf B})$ is the unique dominant eigenvalue of ${\bf B}$, that $$\rho:=\rho({\bf B})>\rho^*(\{{\bf B}_0,\ldots,{\bf B}_{k-1}\})=:\rho^*,$$ that for $n$ large enough $\varSigma(n)\neq 0$ and that the asymptotical behaviour of $\varSigma(n)$ is determined by $\rho({\bf B})$. Then 
\begin{equation}\label{eq:mufaffine} 
\mu_{f}([0,x])=\frac{e_1^T \left({\bf F}\big(\frac{1+(k-1)x}{k}\big)-{\bf F}\big(\frac{1}{k}\big)\right)}{e_1^T \left({\bf F}(1)-{\bf F}\big(\frac{1}{k}\big)\right)}.\end{equation}
\end{theorem}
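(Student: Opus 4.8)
My plan is to evaluate $\lim_{n\to\infty}\mu_n([0,x])$ directly and then match it against the right-hand side of \eqref{eq:mufaffine}, using the weak convergence $\mu_n\to\mu_f$ to a continuous limit measure from \cite{CEM} to make the identification rigorous. Indeed, since $\mu_f$ is continuous its distribution function has no jumps, so by the Portmanteau theorem weak convergence forces $\mu_n([0,x])\to\mu_f([0,x])$ at \emph{every} $x\in[0,1]$; it therefore suffices to compute this left-hand limit by hand and check that it equals the claimed quotient.

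The first step is bookkeeping. Unwinding the definitions of $\mu_n$ and $\varSigma(n)$ and setting $M_n:=k^n+\lfloor (k-1)k^n x\rfloor$, one has
$$\mu_n([0,x])=\frac{1}{\varSigma(n)}\sum_{j=k^n}^{M_n}f(j),\qquad \varSigma(n)=\sum_{j=k^n}^{k^{n+1}-1}f(j),$$
so numerator and denominator are partial sums of $f$ over ranges of $(n+1)$-digit integers. Using the linear representation \eqref{eq:linear} to write $f(j)={\bf w}\,{\bf B}_{(j)_k}\,e_1$, I would decompose each range lexicographically by the base-$k$ digits of $j$: group the integers according to the longest prefix they share with the upper endpoint and let the trailing digits run freely. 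Because $\sum_{c=0}^{k-1}{\bf B}_c={\bf B}$, every block of $r$ free trailing digits contributes a factor ${\bf B}^r$.

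The mechanism that produces ${\bf F}$ is the eigenrelation ${\bf B}\,{\bf v}_\rho=\rho\,{\bf v}_\rho$: applied to the free suffixes it collapses ${\bf B}^r{\bf v}_\rho=\rho^r{\bf v}_\rho$, and the resulting lexicographic sum is exactly the series obtained by iterating \eqref{fjxV}. This yields the closed identity $\sum_{j=k^n}^{M_n}{\bf B}_{(j)_k}\,{\bf v}_\rho=\rho^{\,n+1}\big({\bf F}((M_n+1)/k^{n+1})-{\bf F}(1/k)\big)$, and, with the upper endpoint $k^{n+1}-1$ for $\varSigma$, the companion identity with ${\bf F}(1)-{\bf F}(1/k)$. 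Since $(M_n+1)/k^{n+1}\to\frac{1+(k-1)x}{k}$ and ${\bf F}$ is H\"older continuous (this is where $\rho>\rho^*$ enters), the floors and the off-by-one endpoint corrections are immaterial in the limit. The one genuinely delicate point is that the partial sums of $f$ carry the coordinate vector $e_1$ on the right, whereas the identity above carries the eigenvector ${\bf v}_\rho$. To bridge this I would invoke the standing hypothesis that $\rho$ is the \emph{simple, dominant} eigenvalue of ${\bf B}$: letting $P$ be the rank-one spectral projection onto the $\rho$-eigenline, one has ${\bf B}^r(I-P)=O((\rho')^r)$ for some $\rho'<\rho$, so after the digit decomposition the part of $e_1$ transverse to ${\bf v}_\rho$ contributes only $o(\rho^{\,n+1})$ to each un-normalised sum. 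Hence $e_1$ may be replaced by $({\bf u}^{T}e_1/{\bf u}^{T}{\bf v}_\rho)\,{\bf v}_\rho$ up to lower order, where ${\bf u}$ is the left Perron eigenvector, introducing only the scalar prefactor $\tfrac{{\bf u}^{T}e_1}{{\bf u}^{T}{\bf v}_\rho}\,\rho^{\,n+1}$, which is independent of $x$.

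Assembling the pieces, this $x$-independent scalar is common to numerator and denominator and cancels in the ratio, leaving precisely the quotient of ${\bf F}$-differences in \eqref{eq:mufaffine}; the surviving covector---written $e_1^{T}$ in the statement and fixed by the normalisation \eqref{eq:linear}---records that the ghost measure is built from the single coordinate $f=f_1$. In my view the crux of the argument is the error analysis just described: one must show that the transverse contributions and the tail of the lexicographic sum (where the free suffix is short and the prefix is a long incomplete digit product) are \emph{uniformly} $o(\varSigma(n))$, and it is exactly here that all the hypotheses are consumed---the gap $\rho>\rho^*$ bounds every incomplete digit product via the joint spectral radius, the simplicity of $\rho$ kills the transverse directions, and the prescribed growth $\varSigma(n)\asymp\rho^n$ certifies that the error is negligible against the normalisation. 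The continuity of ${\bf F}$ then promotes the resulting limit to the pointwise identity \eqref{eq:mufaffine} at every $x\in[0,1]$, which by the first paragraph equals $\mu_f([0,x])$.
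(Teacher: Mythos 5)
This theorem is quoted from \cite{CEM} (``see Theorem 5 and its proof''), so the present paper contains no proof to compare against; I can only judge your argument on its own terms. Your overall strategy is the standard one and is sound in outline: continuity of $\mu_f$ makes every $[0,x]$ a $\mu_f$-continuity set, so Portmanteau reduces the claim to computing $\lim_n\mu_n([0,x])$; the identity $\sum_{0\leqslant j<m}{\bf B}_{[j]_N}{\bf v}_\rho=\rho^N{\bf F}(m/k^N)$ (with $[j]_N$ denoting $j$ padded to $N$ base-$k$ digits) is in fact \emph{exact}, provable by induction on $N$ straight from \eqref{dilation} and its boundary values; and the hypotheses $\rho>\rho^*$ (H\"older continuity of ${\bf F}$ and control of incomplete prefixes via the joint spectral radius), simplicity of the dominant eigenvalue (decay of the transverse component of ${\bf B}^r$), and $\varSigma(n)\asymp\rho^n$ (nonvanishing of the limiting denominator) are consumed exactly where you say they are.

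The gap is the step you yourself flag as delicate, and your write-up does not close it. With the paper's conventions, $f(j)={\bf w}\,{\bf B}_{(j)_k}e_1$ with ${\bf w}$ a row vector, the free trailing digits contribute ${\bf B}^r$ acting on $e_1$, and your projection step replaces ${\bf B}^re_1$ by $\rho^r\frac{{\bf u}^Te_1}{{\bf u}^T{\bf v}_\rho}{\bf v}_\rho+O((\rho')^r)$ with $\rho'<\rho$. Carrying this through, the main term of $\sum_{k^n\leqslant j\leqslant M_n}f(j)$ is $\frac{{\bf u}^Te_1}{{\bf u}^T{\bf v}_\rho}\,\rho^{n+1}\,{\bf w}\bigl({\bf F}\bigl(\tfrac{M_n+1}{k^{n+1}}\bigr)-{\bf F}\bigl(\tfrac1k\bigr)\bigr)$: the $x$-independent scalar that cancels in the ratio is ${\bf u}^Te_1/{\bf u}^T{\bf v}_\rho$, and the covector that survives on the ${\bf F}$-differences is ${\bf w}$, \emph{not} $e_1^T$. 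So the argument as written proves $\mu_f([0,x])={\bf w}\bigl({\bf F}\bigl(\tfrac{1+(k-1)x}{k}\bigr)-{\bf F}\bigl(\tfrac1k\bigr)\bigr)/{\bf w}\bigl({\bf F}(1)-{\bf F}\bigl(\tfrac1k\bigr)\bigr)$, and the sentence asserting that the surviving covector is ``written $e_1^T$ in the statement and fixed by the normalisation \eqref{eq:linear}'' is precisely the claim that needs proof; under the conventions of the present paper it does not follow (the two expressions coincide only after transposing the linear representation, i.e.\ reading the dilation equation with ${\bf B}_a^T$ and boundary value ${\bf w}^T$, which is evidently the convention of \cite{CEM}). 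A symptom of the slip: in your version the choice of $e_i$ on the right affects only the cancelled scalar ${\bf u}^Te_i/{\bf u}^T{\bf v}_\rho$, so every $f_i$ would receive the \emph{same} ghost distribution, contradicting the remark following the theorem that this holds only in the primitive case. The strategy is right, but the final identification of \eqref{eq:mufaffine} has to be argued, not asserted.
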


\begin{remark}
Recall from the definition of a $k$-regular sequence that the $\mathbb{Q}$-vector space $\mathcal{V}_{k}(f)$ generated by the $k$-kernel is generated by the set $\left\{f_1,\ldots,f_d\right\}$, where we have set $f_1:=f$. From the solution ${\bf F}(x)$, one can in fact recover the ghost distributions for all $f_i$ by looking at the $i$th coordinate of ${\bf F}(x)$, which can be achieved by replacing $e^{T}_1$ with $e^{T}_i$ in Eq.~\eqref{eq:mufaffine}.
In the special case when $f$ is primitive, it follows from \cite[Thm.~1]{CEM} that the ghost measures associated to each $f_i$ are the same, and hence it suffices to consider $f_1$ in this setting.
\end{remark}

Combining Theorem \ref{thm:cem} with Theorem \ref{thm:dilationtoifs}, we have the following result, which immediately implies Theorem \ref{thm:ghostselfaffine}.

\begin{theorem}\label{main} Let $k\geqslant 2$ be an integer and $f$ be a $k$-regular sequence. Suppose that the spectral radius $\rho({\bf B})$ is the unique dominant eigenvalue of ${\bf B}$, that $$\rho:=\rho({\bf B})>\rho^*(\{{\bf B}_0,\ldots,{\bf B}_{k-1}\})=:\rho^*,$$ that for $n$ large enough $\varSigma(n)\neq 0$ and that the asymptotical behaviour of $\varSigma(n)$ is determined by $\rho({\bf B})$. Then the ghost distribution of $f$ is an affine transformation of a self-affine set.
\end{theorem}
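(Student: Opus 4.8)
The plan is to read off Theorem \ref{main} by composing the two structural results already in hand: the explicit formula of Theorem \ref{thm:cem}, which expresses the ghost distribution through the dilation solution ${\bf F}$, and the self-affinity of the graph $\mathcal{F}_f$ of ${\bf F}$ supplied by Theorem \ref{thm:dilationtoifs}. I would first observe that the four hypotheses imposed in Theorem \ref{main} are exactly those needed to invoke both results at once: the unique dominant eigenvalue condition together with $\rho>\rho^*$ are precisely the hypotheses of Theorem \ref{thm:dilationtoifs} (which, with the preceding discussion, also guarantees the existence and uniqueness of ${\bf F}$ and its self-affine graph), while the two additional conditions on $\varSigma(n)$ are what Theorem \ref{thm:cem} needs in order to identify $\mu_f$ with ${\bf F}$.

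First I would set $G(x):=\mu_f([0,x])$ and apply Theorem \ref{thm:cem} to obtain
\[ G(x)=\frac{e_1^T\big({\bf F}(\tfrac{1+(k-1)x}{k})-{\bf F}(\tfrac1k)\big)}{e_1^T\big({\bf F}(1)-{\bf F}(\tfrac1k)\big)}. \]
The key observation is that the right-hand side is an \emph{affine} function of the single scalar $F_1(u):=e_1^T{\bf F}(u)$ evaluated at the affinely reparametrised argument $u=\frac{1+(k-1)x}{k}$. Indeed, as $x$ runs over $[0,1]$ the variable $u$ runs over $[\tfrac1k,1]$, and the correspondence $u\mapsto x=\frac{ku-1}{k-1}$ is affine, while the normalising denominator and the subtracted constant ${\bf F}(1/k)$ amount to an affine rescaling of the vertical coordinate. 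I would therefore introduce the explicit affine map $\Phi:\mathbb{R}^{d+1}\to\mathbb{R}^2$ that sends a point $(y_0,y_1,\ldots,y_d)$ to its coordinate pair $(y_0,y_1)$, reparametrises $y_0$ to $x=\frac{ky_0-1}{k-1}$, and rescales $y_1$ by the scalar $\big(e_1^T({\bf F}(1)-{\bf F}(1/k))\big)^{-1}$ with the matching translation. With this $\Phi$, the graph of $G$ is exactly the image under $\Phi$ of the portion of $\mathcal{F}_f$ lying over $y_0\in[\tfrac1k,1]$.

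It then remains to feed in Theorem \ref{thm:dilationtoifs}, which asserts that $\mathcal{F}_f=\{(u,{\bf F}(u)):u\in[0,1]\}$ is the self-affine attractor of $\mathcal{S}_f=\{S_0,\ldots,S_{k-1}\}$. Since each $S_j$ compresses the $y_0$-coordinate into $[j/k,(j+1)/k]$, the relevant section satisfies $\mathcal{F}_f\cap\{y_0\in[\tfrac1k,1]\}=\bigcup_{j=1}^{k-1}S_j(\mathcal{F}_f)$, a section of the self-affine set. Assembling the pieces, the graph of the ghost distribution equals $\Phi\big(\mathcal{F}_f\cap\{y_0\in[\tfrac1k,1]\}\big)$, an affine transformation of the self-affine set $\mathcal{F}_f$ (more precisely, of its section over $y_0\in[\tfrac1k,1]$), which is the assertion of Theorem \ref{main}; Theorem \ref{thm:ghostselfaffine} then follows because a primitive sequence satisfies these hypotheses.

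The step I expect to demand the most care---the genuine content hidden behind the word ``immediately''---is the bookkeeping of the final paragraph. One must check that the domain restriction $y_0\in[\tfrac1k,1]$ corresponds exactly to a section of $\mathcal{F}_f$, and, when $d>1$, that recording only the first component $F_1$ of ${\bf F}$ is captured by a legitimate (linear, hence affine) coordinate projection $\mathbb{R}^{d+1}\to\mathbb{R}^2$. The latter point is worth stressing: the vertical dynamics of each $S_j$ mixes all $d$ components of ${\bf F}$ through $\rho^{-1}{\bf B}_j$, so the planar graph of $F_1$ alone is a projected shadow of $\mathcal{F}_f$ rather than an attractor in its own right, and the self-affinity genuinely lives upstairs in $\mathbb{R}^{d+1}$. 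I would finally sanity-check the construction against the Salem case $k=2$, $d=1$, where no projection is needed, the section reduces to the single copy $S_1(\mathcal{F}_f)$, and $\Phi$ stretches it back to recover the entire graph $\mathcal{F}_s=\mathcal{A}_s$, consistent with the identity $\mu_f([0,x])=F_s(x)$ recorded after Proposition \ref{salem}.
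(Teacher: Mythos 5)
Your proposal is correct and follows essentially the same route as the paper: the published proof likewise obtains Theorem \ref{main} by combining the self-affinity of $\mathcal{F}_f$ from Theorem \ref{thm:dilationtoifs} with the affine relation \eqref{eq:mufaffine} from Theorem \ref{thm:cem}, and your explicit construction of the affine map $\Phi$ and the identification of the relevant section $\bigcup_{j=1}^{k-1}S_j(\mathcal{F}_f)$ simply makes precise what the paper leaves implicit. The only cosmetic difference is that the paper first disposes separately of the degenerate case where all digit matrices coincide (ghost measure equal to Lebesgue measure), a case your argument covers anyway.
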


\begin{proof} We have two cases depending on the distinctness of the digit matrices of $f$. Firstly, if all of the digit matrices are equal, then the sequence $f$ is constant between powers of $k$, and so the ghost measure is just standard Lebesgue measure, whose distribution function is clearly self-affine. Secondly, suppose the digit matrices of $f$ are not all equal. Combining the result of Theorem \ref{thm:dilationtoifs}---that the solution of the related dilation equation is self-affine---we obtain the desired conclusion noting that the relationship in \eqref{eq:mufaffine} gives that the ghost distribution is the affine image of a solution of a dilation equation. 
\end{proof}

\section{Characterisation of $1$-dimensional ghost distributions}\label{sec:1d}

In this section, we characterise the spectral type of the ghost measures of $1$-dimensional regular sequences, that is, the Salem sequences. 

It is clear that if the digits (nonnegative integers\footnote{In this paper, we have assumed that regular sequences are integer-valued. In general, this is not necessary. In particular, Proposition \ref{prop:salemseq} holds for Salem sequences with nonnegative real digits.}) of a nonzero Salem sequence are all equal then the resulting ghost measure is precisely Lebesgue measure. The result is quite different when the digits are not all equal. Indeed, Salem \cite{S1943} showed that the ghost distributions of positive $2$-regular Salem sequences whose two digits are not equal are singular continuous. The main result of this section is the following proposition.

\begin{proposition}\label{prop:salemseq} Let $f$ be a $k$-regular Salem sequence with digits $b_0,b_1,\ldots,$ $b_{k-1}$ that are not all equal.  Then 
\begin{enumerate}
\item[(a)] If only $b_0$ is nonzero, then the ghost measure is the zero measure.
\item[(b)] If only one of the digits is nonzero, and $b_0=0$, then the ghost measure is pure point.
\item[(c)] If more than one of the digits is nonzero, then the ghost measure is singular continuous.
\end{enumerate}
\end{proposition}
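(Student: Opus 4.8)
The plan is to treat the three cases via the one-dimensional dilation equation
\begin{equation*}
F(x)=\sum_{a=0}^{k-1}\rho^{-1}b_a\,F(kx-a),\qquad F(x)=0\ (x\leqslant 0),\quad F(x)=1\ (x\geqslant 1),
\end{equation*}
where here the digit matrices are the scalars $b_a$, so $\rho={\bf B}=\sum_{a}b_a$ is simply the sum of the digits and ${\bf v}_\rho=1$. By Theorem~\ref{thm:cem}, the ghost distribution of $f$ is an affine image of $F$ (via \eqref{eq:mufaffine}), so it suffices to understand $F$ and read off the spectral type of the corresponding measure $\mathrm{d}F$. First I would record the self-similarity of $F$ coming from Theorem~\ref{thm:dilationtoifs}: on the base-$k$ subinterval $I_j=[j/k,(j+1)/k]$, the graph of $F$ is the affine copy $S_j$ of the whole graph, so $F$ on $I_j$ increases by exactly $\rho^{-1}b_j=b_j/\sum_a b_a$. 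This means the measure $\mathrm{d}F$ assigns mass $b_j/\sum_a b_a$ to each cylinder $I_j$, and by iterating, mass $\prod_{\ell} b_{j_\ell}/\rho^{n}$ to the level-$n$ cylinder indexed by the base-$k$ string $j_1\cdots j_n$. The whole argument is then a matter of analysing this self-similar Bernoulli-type mass distribution on $[0,1]$.

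For part (a), if only $b_0\ne 0$ then $\rho=b_0$ and every cylinder $I_j$ with $j\geqslant 1$ receives mass zero while $I_0$ receives mass $1$; iterating forces all the mass onto $\{0\}$, i.e.\ $F$ is identically $0$ on $[0,1)$ and jumps to $1$ at $x=1$. Feeding this $F$ into the normalisation \eqref{eq:mufaffine}, the numerator $e_1^{T}(F(\tfrac{1+(k-1)x}{k})-F(\tfrac1k))$ vanishes for all $x<1$ because the argument $\tfrac{1+(k-1)x}{k}$ stays in $[1/k,1)$ where $F\equiv 0$; hence the ghost measure is the zero measure. For part (b), if exactly one digit $b_{j_0}$ with $j_0\geqslant 1$ is nonzero, then $\rho=b_{j_0}$, all the mass of $\mathrm{d}F$ is carried by the single cylinder $I_{j_0}$, and iterating concentrates the mass at the point $x^\ast=\sum_{\ell\geqslant 1} j_0 k^{-\ell}=j_0/(k-1)$, the base-$k$ repeating expansion $.\overline{j_0}$. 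Thus $F$ is a single jump and $\mathrm{d}F$ is a point mass; after the affine change of variables in \eqref{eq:mufaffine} the ghost measure remains pure point.

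For part (c), the substantive case, I would show the self-similar measure $\nu:=\mathrm{d}F$ is continuous and singular. Continuity follows because at least two digits are nonzero, so every cylinder weight $b_j/\rho$ is strictly less than $1$; the mass of the level-$n$ cylinder containing any point is at most $(\max_j b_j/\rho)^{n}\to 0$, ruling out atoms, and $F$ is therefore continuous and strictly increasing on the relevant support. For singularity, the clean route is the law of large numbers for the digit process under $\nu$: writing a $\nu$-random $x$ through its base-$k$ digits $X_1,X_2,\dots$, these are i.i.d.\ with $\mathbb{P}(X=j)=b_j/\rho=:p_j$, and the local scaling exponent of $\nu$ at $\nu$-almost every $x$ equals $\sum_j p_j\log_k(1/p_j)$ (the entropy), whereas Lebesgue measure forces digits to be uniform with exponent $\log_k k=1$. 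Since the $p_j$ are not all equal to $1/k$ (the digits are not all equal), strict concavity of entropy gives $\sum_j p_j\log_k(1/p_j)<1$, so $\nu$ is concentrated on a set of Lebesgue measure zero while Lebesgue measure sits on its complement; hence $\nu\perp\mathrm{Leb}$, and $F'=0$ almost everywhere. The main obstacle is making the almost-everywhere-local-dimension argument rigorous---that is, invoking the strong law of large numbers to pin down the $\nu$-a.e.\ lower local dimension and then converting this into mutual singularity with Lebesgue measure---so that is where I would be most careful, appealing to the standard Billingsley/Eggleston dimension machinery for self-similar Bernoulli measures rather than reproving it. Finally, since the affine transformation in \eqref{eq:mufaffine} is a nondegenerate rescaling of the $x$-variable when more than one digit is nonzero, it preserves each of continuity, singularity and the pure-point/zero dichotomy, so the conclusions transfer verbatim from $\nu$ to the ghost measure of $f$.
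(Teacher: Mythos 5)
Your treatment of part (c) is correct but takes a genuinely different route from the paper. The paper splits (c) into two subcases: when some digit vanishes it simply observes that the measure is supported on a missing-digit set of Lebesgue measure zero, and when all digits are positive it runs Salem's original argument, bounding the difference quotient $|S(x)-S(y_n)|/|x-y_n|$ at a simply normal point $x$ by $\bigl(\tfrac{k}{b}\prod_j b_j^{1/k}\bigr)^n$ times a subexponential correction and invoking the arithmetic--geometric mean inequality to conclude that the (a.e.\ existing) derivative of the monotone function $S$ vanishes almost everywhere. You instead identify $\mathrm{d}F$ as the Bernoulli measure with cylinder weights $p_j=b_j/\rho$ and prove mutual singularity with Lebesgue measure via the law of large numbers / entropy (Eggleston--Billingsley) argument: $\nu$-a.e.\ point has digit frequencies $(p_j)$ while Lebesgue-a.e.\ point has uniform frequencies, and these sets are disjoint when the $p_j$ are not all $1/k$. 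Your route handles both subcases of (c) uniformly and does not need a.e.\ differentiability of monotone functions; the paper's route is more elementary and directly exhibits the vanishing derivative. Both ultimately rest on normality of Lebesgue-typical points.

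There is, however, a gap in your treatment of (a) and (b): in those degenerate cases $\rho=\sum_a b_a$ coincides with $\rho^*=\max_a b_a$, so the hypotheses under which the dilation equation has a (continuous) solution and under which Theorem~\ref{thm:cem} (formula \eqref{eq:mufaffine}) identifies the ghost measure with an affine image of $\mathrm{d}F$ both fail, and you cannot legitimately invoke that machinery. Indeed, in case (a) the sequence $f$ is identically zero for $n\geqslant 1$ (the leading base-$k$ digit of a positive integer is nonzero), so $\varSigma(n)=0$ and the normalisation in \eqref{eq:mufaffine} degenerates to $0/0$; moreover your description of $F$ there is internally inconsistent---if all the mass of $\mathrm{d}F$ sits at $\{0\}$, as your cylinder argument correctly shows, then $F$ jumps at $0$, not at $x=1$. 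The paper avoids all of this by arguing directly from the definition of $\mu_n$: in (a) the sequence is the zero sequence, and in (b) exactly one integer in each block $[k^n,k^{n+1})$ carries a nonzero value, so each $\mu_n$ is a single Dirac mass and the weak limit is a point mass. You should replace the appeal to \eqref{eq:mufaffine} in parts (a) and (b) by such a direct computation; with that repair, the remainder of your argument stands.
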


\begin{proof} Parts (a) and (b) are straightforward. If $b_0$ is the only nonzero digit, then since all base expansions of positive integers start with a digit other than zero, the Salem sequence is the zero sequence, and so its measure is the zero measure; this proves (a). If only one of the digits is nonzero, say $b_j$ with $j\neq 0$, then the ghost measure is supported on the single number in $[0,1]$ whose $k$-ary expansion is $0.jjjjj\cdots$, so that the ghost measure is equal to the delta measure $\delta_{j/(k-1)}$; this proves (b).

Case (c) breaks into two cases. The first is also straightforward---the case that one of the digits is zero. For if one of the digits is zero and at least two are not zero, then the ghost measure is supported on an uncountable set of measure zero: the set of numbers in $[0,1]$ whose $k$-ary expansions contain only the $k$-ary digits $j$ with $b_j\neq 0$. 

Thus we can suppose that none of the $b_j$ are zero. Using Theorem \ref{thm:ghostselfaffine}, to show that the ghost measure is singular continuous it is enough to show that the associated Salem attractor, or solution to the related dilation equation, provided for in Theorem~\ref{thm:dilationtoifs} is singular continuous; that is, the attractor of the iterated function system $\mathcal{S}:=\{S_0,\ldots,S_{k-1}\}$ with $$S_j\left(\begin{matrix}x\\ y\end{matrix}\right)=\left(\begin{matrix} 1/k & 0\\ 0 & b_j/b\end{matrix}\right)\left(\begin{matrix}x\\ y\end{matrix}\right)+\left(\begin{matrix}j/k\\  \sum_{a=0}^{j-1}b_a/b\end{matrix}\right),$$ where $b:=b_0+b_1+\cdots+b_{k-1}$. We write the points of the attractor as $(x,S(x))$, and note that $S(x)$ is the solution to the dilation equation $$S(x)=\frac{1}{b}\, \sum_{a=0}^{k-1}\, b_a\cdot S(kx-a),\quad\mbox{where}\quad S(x)=\begin{cases} 0 & x\leqslant 0\\ 1 & x\geqslant 1\end{cases}.$$ See Figure \ref{fig:Salem} for a plot of both the Salem attractor and ghost distribution for the case $k=2$ and $(b_0,b_1)=(2,3)$; the grey boxes illustrate Theorem \ref{thm:cem}.

We use an old argument of Salem involving simply normal numbers. Recall that a real number $x$ is simply normal to the base $k$, provided each $k$-ary digit occurs in its base expansion with frequency $1/k$, and that  Lebesgue-almost all numbers in $[0,1]$ are simply normal. Let $x\in[0,1]$ be a simply normal number with base-$k$ expansion $(x)_k=0.x_1x_2x_3\cdots$. Note that for such $x$ and any $j\in\{0,1,\ldots,k-1\}$, we have that the number of $x_i=j$ with $i$ up to $n$ is $n/k+o(n)$. Now, for each $n\geqslant 0$, set $$y_n=x+\frac{b_{n+1}}{k^{n+1}},\quad\mbox{where}\quad b_{n+1}=\begin{cases}1 &\mbox{if $x_{n+1}=0$}\\ -1 &\mbox{otherwise}.\end{cases}$$ The first $n$ digits in the $k$-ary expansion of $y_n$ agrees with those of $x$, thus 
\begin{align*}
\left|\frac{S(x)-S(y_n)}{x-y_n}\right|
&<k^{n+1}(b_{x_1}/b)(b_{x_2}/b)\cdots(b_{x_n}/b)\\
&\leqslant \left(\frac{k}{b}\prod_{j=0}^{k-1}b_{j}^{1/k}\right)^{n}\cdot  k\cdot\left(\prod_{j=0}^{k-1}b_{j}\right)^{|r(n)|},
\end{align*} 
where $|r(n)|=o(n)$ as $n\to\infty$. Also, since $S$ is continuous and increasing, the derivative of $S$ exists almost everywhere. To finish our proof, it is enough to invoke the arithmetic-geometric mean inequality, to see that \begin{equation*}\frac{k}{b}\prod_{j=0}^{k-1}b_{j}^{1/k}<\frac{k}{b}\left(\frac{b_0+b_1+\cdots+b_{k-1}}{k}\right)=1,\end{equation*} so that $|S(x)-S(y_n)|/|x-y_n|\to0$ as $n\to\infty$.
\end{proof}

\begin{figure}[ht]
\begin{center}
  \includegraphics[width=.98\linewidth]{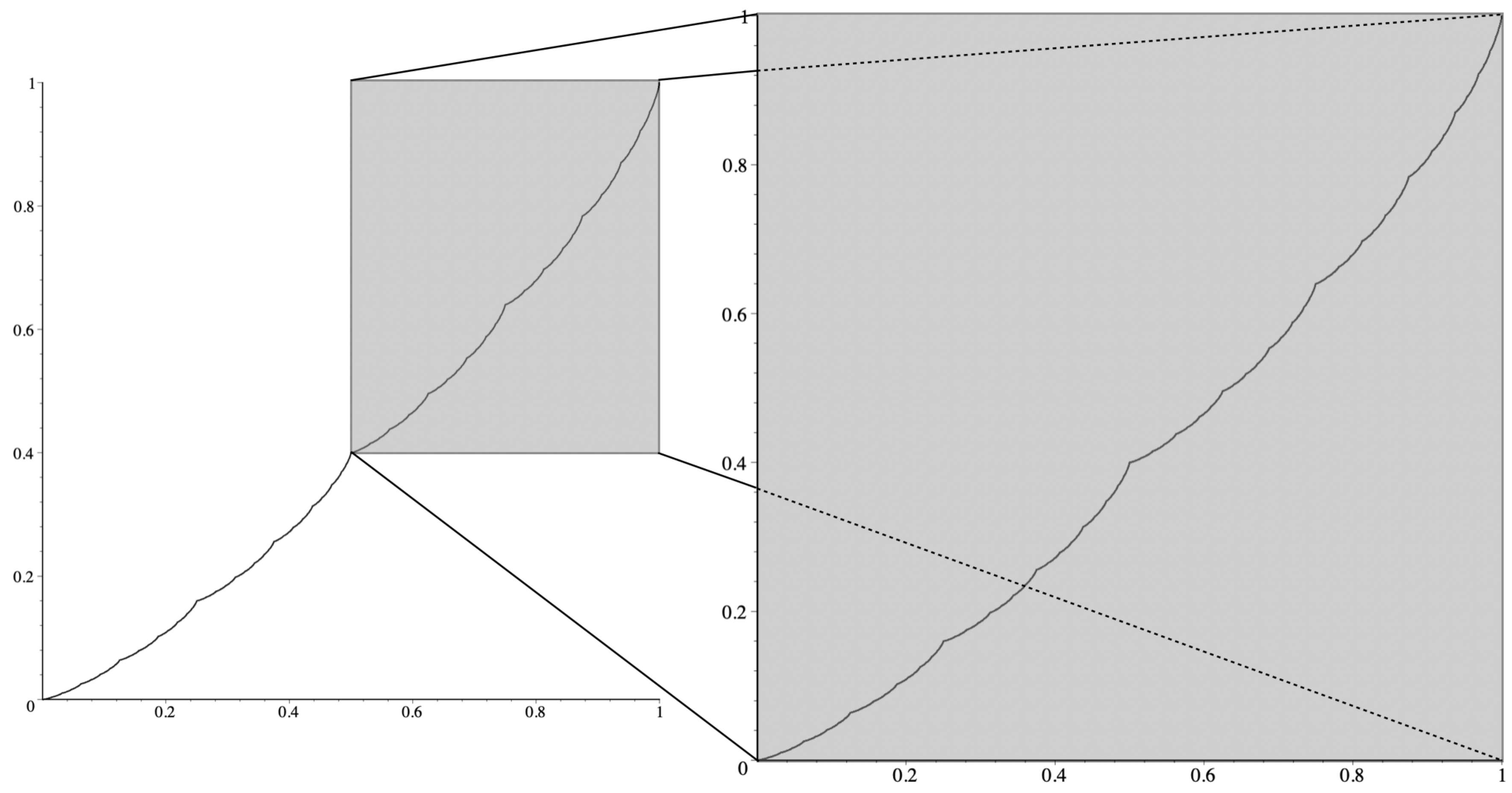}
  \end{center}
\caption{\small The $2$-regular Salem ghost distribution with $(b_0,b_1)=(2,3)$ (right) is an affine image of a section of the related Salem attractor (left).}
\label{fig:Salem}
\end{figure}

The careful reader acquainted with so-called `missing-digit' sets, such as the middle-third Cantor set, may recognise that the ghost measures of Salem sequences are precisely the generalisation of standard mass distributions supported on missing-digit sets. For example, the standard Cantor measure, supported on real numbers $x\in[0,1]$ with a ternary expansion not containing the digit $1$, is the ghost measure of the $3$-regular Salem sequence with $(b_0,b_1,b_2)=(1,0,1)$. Unlike with standard missing-digit distributions, corresponding to choosing only the digits zero or one for our Salem sequence construction, in our context we can choose different weights, which then, by Proposition \ref{prop:salemseq}, give new singular continuous distributions; see Figure~\ref{fig:Cants} for the standard Cantor distribution (the Devil's staircase) and two Salem sequence variants.

\begin{figure}[ht]
\begin{center}
  \includegraphics[width=.32\linewidth]{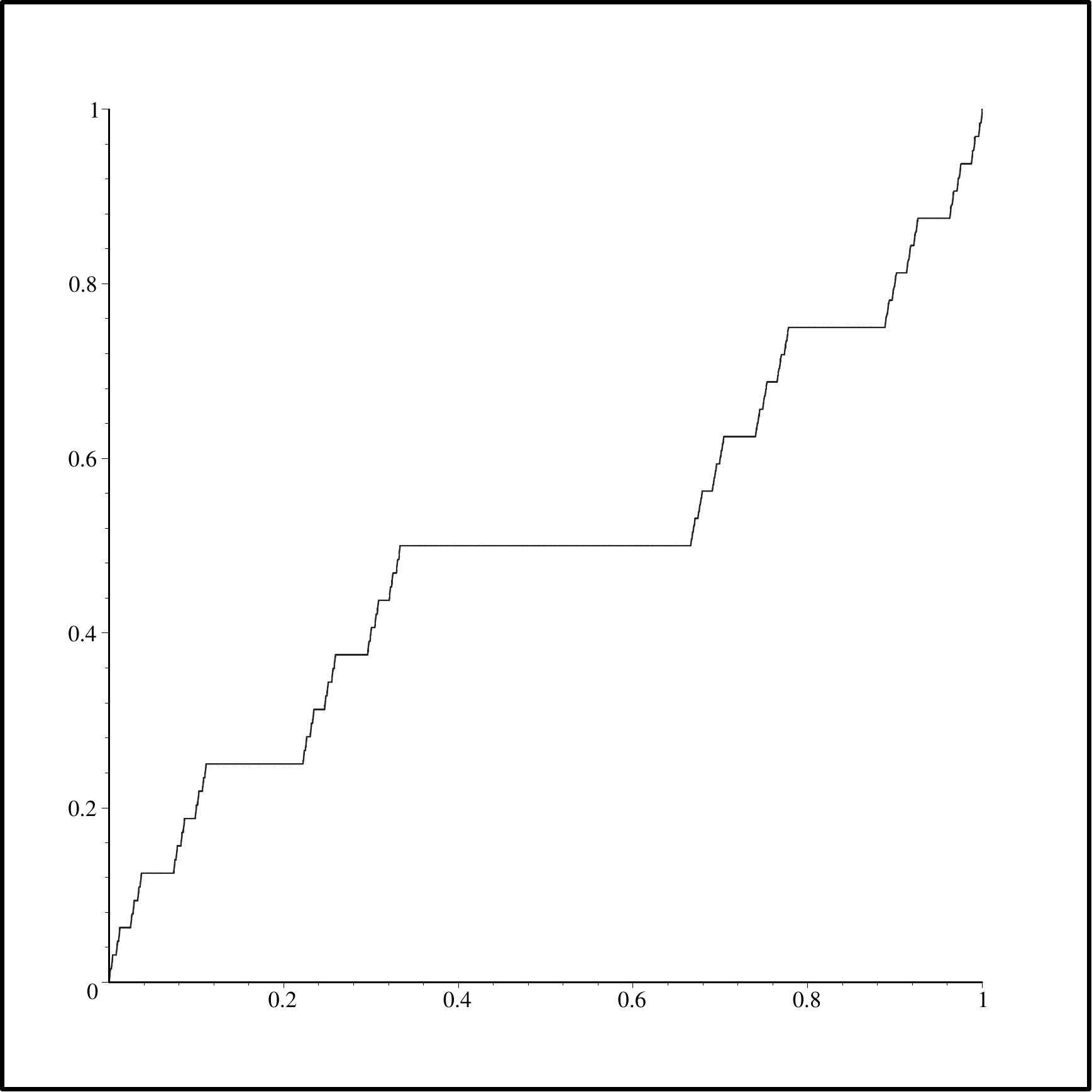}
  \includegraphics[width=.32\linewidth]{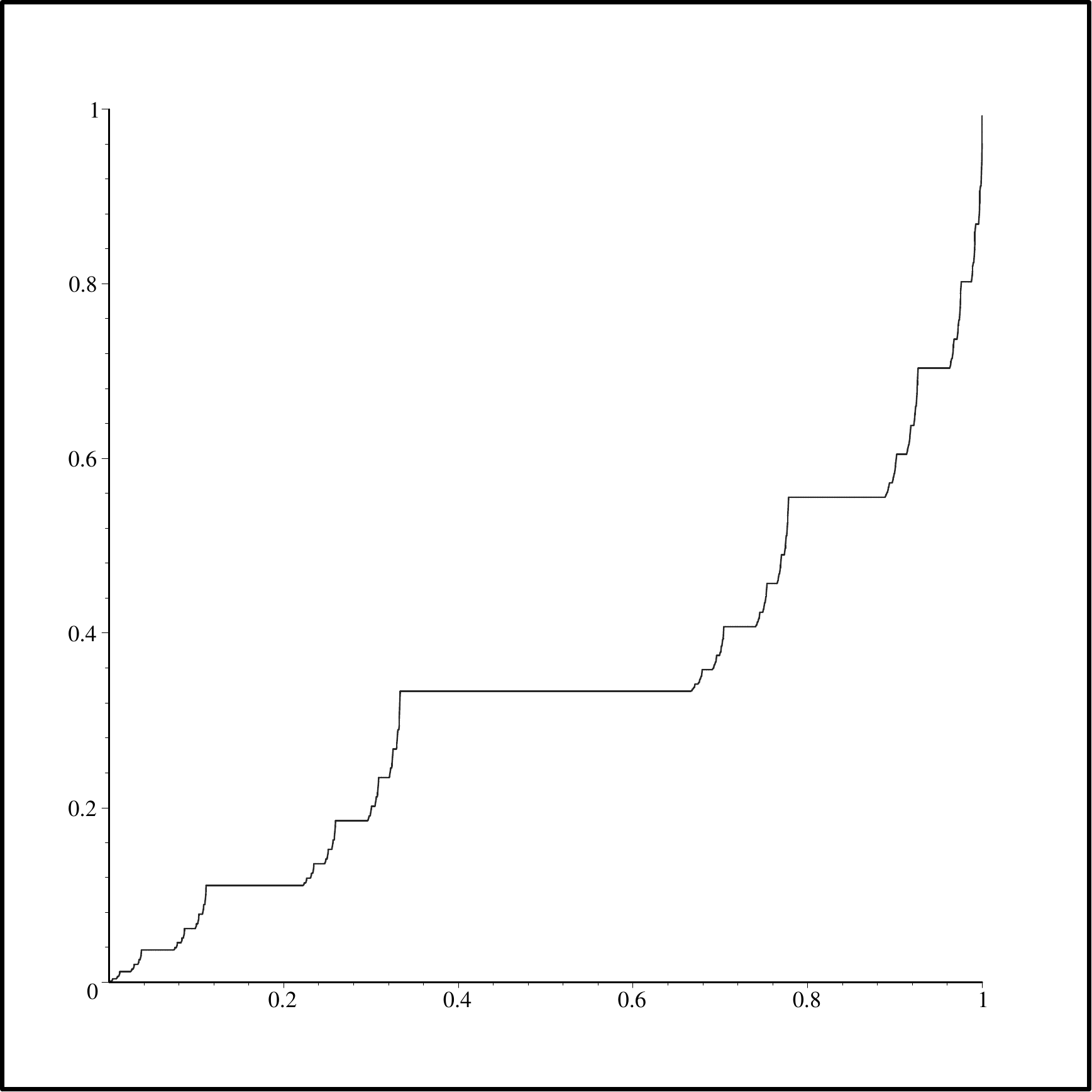} 
  \includegraphics[width=.32\linewidth]{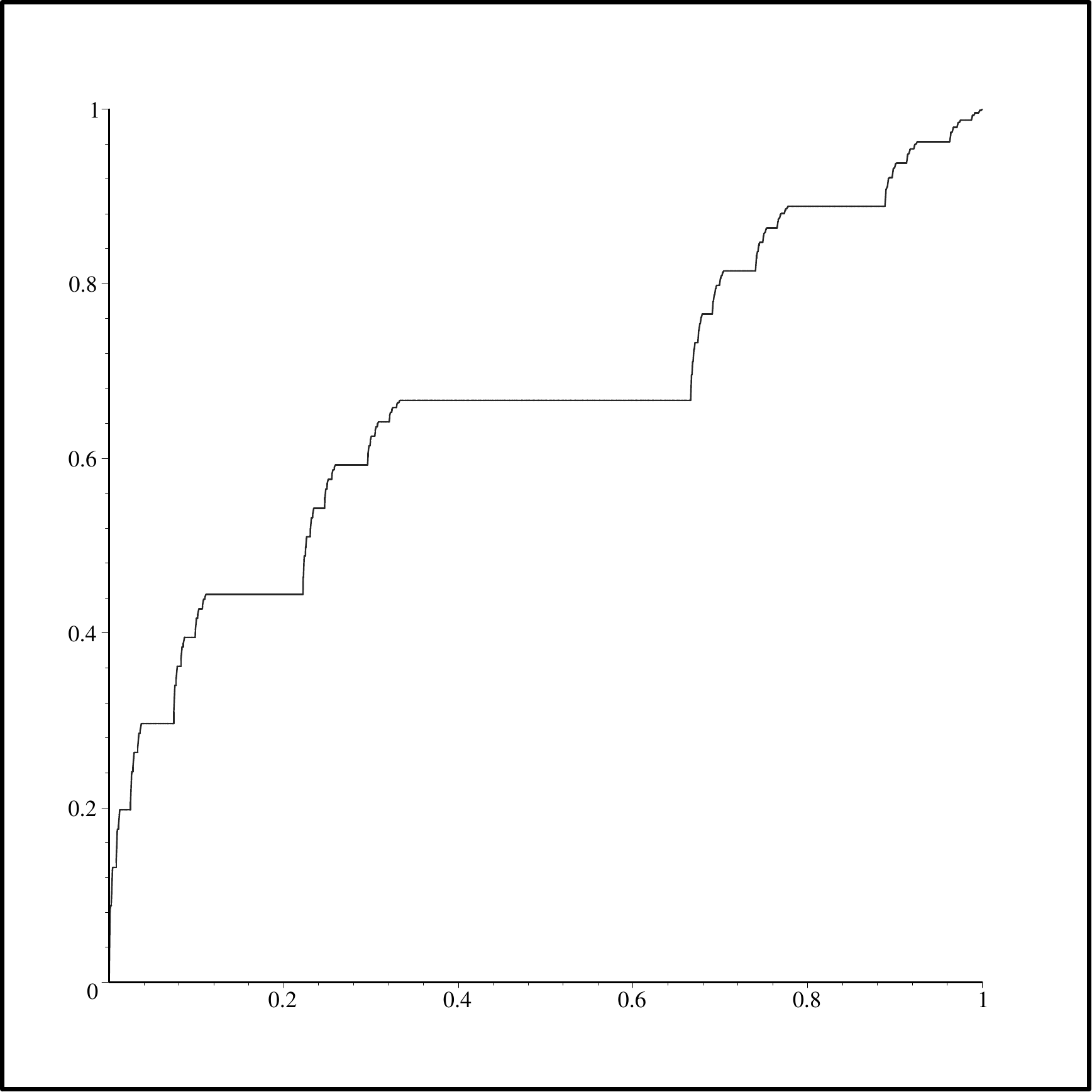}  
  \end{center}
\caption{\small The Devil's staircase (left)---the ghost distribution of the standard Cantor middle-thirds set---along with the ghost distributions of the $3$--regular Salem sequences with digits $(b_0,b_1,b_2)$ equal to $(1,0,2)$ (middle) and $(2,0,1)$ (right).}
\label{fig:Cants}
\end{figure}

\begin{remark} 
It is interesting to note that the Cantor distribution was one of the first examples of a singular continuous function, while Salem's contribution was ``to give simple, direct constructions of strictly increasing singular functions.'' While at the time these seemed to be very different ideas, our definition of the Salem sequence shows in fact that these are just two examples of the same phenomenon.
\end{remark}

\section{Singular continuity of Zaremba's ghost distributions}\label{sec:zaremba}

For all $x\in(0,1)$ we write the ordinary continued fraction expansion of $x$ as $x=[a_1,a_2,a_3,\ldots],$ where the positive integers $a_1,a_2,a_3,\ldots,$ are the partial quotients of $x$. The convergents of the number $x$ are the rationals $p_n/q_n:=[a_1,\ldots,a_n]$ and can be computed using the well-known relationship 
$$\left(\begin{matrix} a_n&1\\ 1&0\end{matrix}\right)\cdots\left(\begin{matrix} a_2&1\\ 1&0\end{matrix}\right)\left(\begin{matrix} a_1&1\\ 1&0\end{matrix}\right)=\left(\begin{matrix} q_n&p_{n}\\ q_{n-1}&p_{n-1}\end{matrix}\right).$$ 
See the monograph ``Neverending Fractions'' by Borwein, van der Poorten, Shallit and Zudilin \cite{BvdPSZ} for details and properties regarding continued fractions. Denote by $\mathfrak{D}_k$ the set of denominators of convergents of real numbers $x\in(0,1)$ all of whose partial quotients are bounded above by $k$. Zaremba \cite{Z1972} conjectured that {\em there is a positive integer $k$ such that $\mathfrak{D}_k=\mathbb{N}$.} Bourgain and Kontorovich \cite{BK2014} proved that the set of denominators $\mathfrak{D}_{50}$ has full density in $\mathbb{N}$; this was improved by Huang \cite{H2015} and Jenkinson and Pollicott \cite{JP2019}, who proved the analogous result for $\mathfrak{D}_5$. 

In this section, we study ghost measures of the so-called {\em Zaremba sequences}; see Coons \cite{C2018} for more details on these sequences and results relating to their power series generating functions. For the purposes of this section, we call the $k$-regular sequence $z_k$ a {\em $k$-Zaremba sequence} provided it has linear representation \eqref{eq:linear} satisfying ${\bf w}=(1\ 0)$ and $${\bf B}_i=\left(\begin{matrix} i+1 & 1\\ 1& 0\end{matrix}\right),\quad\mbox{for } i=0,1,\ldots,k-1.$$ Note that the sequence $z_k$ is an ordering of the set $\mathfrak{D}_k$. In this way, Zaremba's conjecture can be stated as a spectral question about $z_k$; in particular, {\em is there an integer $k$ such that $z_k$ takes all integer values?} Of course, in this paper we are not focusing on Zaremba's conjecture, but on the nature\footnote{Of course, it would be extremely interesting if one could pull out a result on the values of $z_k$ by examining its ghost distribution... It does seem likely that if the ghost distribution of a regular sequence is not continuous, then the sequence would not take all integer values.} of the related ghost measure and ghost distribution. See Figure \ref{fig:Zaremba} for a plot of the $2$-Zaremba ghost distribution and its related attractor.

\begin{figure}[ht]
\begin{center}
  \includegraphics[width=.98\linewidth]{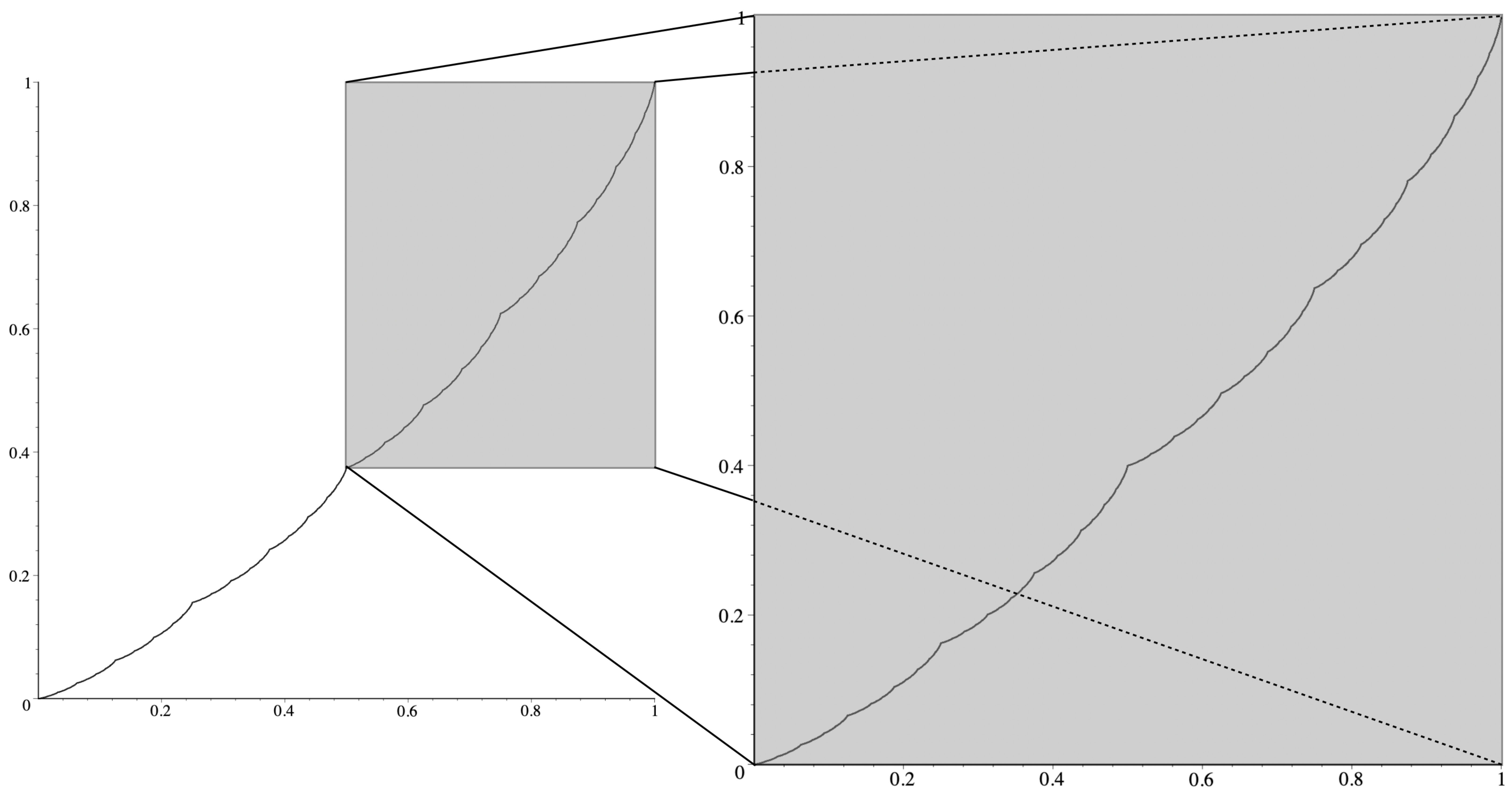}
  \end{center}
\caption{The $2$-Zaremba ghost distribution (right) is an affine image of a section of the related $2$-Zaremba attractor (left).}
\label{fig:Zaremba}
\end{figure}

A result of Coons, Evans and Ma\~nibo \cite[Thm.~6]{CEM} applies to give that the ghost measure of $z_k$ is continuous. The main result of this section is to show that this distribution is singular.

\begin{theorem}\label{thm:zsc} The ghost measure of the Zaremba sequence $z_k$ is singular continuous.
\end{theorem}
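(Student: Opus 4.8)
The plan is to deduce singular continuity from the fact that the ghost distribution has an almost everywhere vanishing derivative; since the ghost measure of $z_k$ is already known to be continuous by \cite[Thm.~6]{CEM}, this suffices. By Theorem~\ref{thm:cem} the ghost distribution is, up to an affine change of variables with nonzero slope, the first coordinate $F_1:=e_1^T{\bf F}$ of the solution ${\bf F}$ of the dilation equation \eqref{dilation} built from the Zaremba matrices ${\bf B}_i$. An affine reparametrisation sends Lebesgue-null sets to Lebesgue-null sets and preserves the property of having a zero derivative almost everywhere, so it is enough to show that ${\bf F}$ has derivative ${\bf 0}$ at Lebesgue-almost every point of $[0,1]$. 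Because the ${\bf B}_i$ and ${\bf v}_\rho$ are nonnegative, each coordinate of ${\bf F}$ is monotone, hence of bounded variation and differentiable almost everywhere; it therefore remains only to exhibit, for a.e.\ $x$, a sequence $y_n\to x$ along which the difference quotient tends to zero.

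First I would use the self-affine structure of Theorem~\ref{thm:dilationtoifs}. Writing $(x)_k=.x_1x_2\cdots$, the restriction of the graph of ${\bf F}$ to the depth-$n$ cylinder $[\,.x_1\cdots x_n,\,.x_1\cdots x_n+k^{-n}\,]$ is the image of the whole graph under $S_{x_1}\circ\cdots\circ S_{x_n}$, whose vertical linear part is $\rho^{-n}{\bf B}_{x_1}\cdots{\bf B}_{x_n}$. Choosing $y_n:=x\pm k^{-(n+1)}$, with the sign fixed so that $x$ and $y_n$ share their first $n$ digits, both points lie in this cylinder and
\begin{equation*}
\frac{\|{\bf F}(x)-{\bf F}(y_n)\|}{|x-y_n|}\ \leqslant\ k^{\,n+1}\,\rho^{-n}\,\big\|{\bf B}_{x_1}\cdots{\bf B}_{x_n}\big\|\cdot\mathrm{diam}\,{\bf F}([0,1]).
\end{equation*}
As $\mathrm{diam}\,{\bf F}([0,1])$ is a finite constant, this difference quotient tends to $0$ as soon as $k^{n}\rho^{-n}\|{\bf B}_{x_1}\cdots{\bf B}_{x_n}\|\to 0$.

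This is where the probabilistic heart lies. For Lebesgue-almost every $x$ the digits $x_1,x_2,\ldots$ are independent and uniform on $\{0,\ldots,k-1\}$, so by the Furstenberg--Kesten theorem $\tfrac1n\log\|{\bf B}_{x_1}\cdots{\bf B}_{x_n}\|\to\lambda$ almost surely, where $\lambda$ is the top Lyapunov exponent of the uniform i.i.d.\ product of the digit matrices. Taking logarithms above, the difference quotient decays geometrically precisely when $\lambda<\log(\rho/k)$. Since $\rho/k=\rho({\bf B}/k)=\rho(\overline{\bf B})$ for the mean matrix $\overline{\bf B}:=\frac1k\sum_{a}{\bf B}_a$, the required estimate is the strict inequality $\lambda<\log\rho(\overline{\bf B})$ — the matrix analogue of the arithmetic--geometric mean step that closes Salem's argument in Proposition~\ref{prop:salemseq}. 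Note that the joint-spectral-radius bound $\lambda\leqslant\log\rho^*$ is by itself too weak here; one genuinely needs the typical (Lyapunov) growth to beat the spectral radius of the \emph{mean}.

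I expect this strict inequality to be the main obstacle. The non-strict bound $\lambda\leqslant\log\rho(\overline{\bf B})$ is immediate from Jensen's inequality applied to $Z_n:=w^T{\bf B}_{x_1}\cdots{\bf B}_{x_n}{\bf v}_\rho$, where $w>0$ is fixed (say the left Perron eigenvector of $\overline{\bf B}$): independence and $\overline{\bf B}{\bf v}_\rho=\rho(\overline{\bf B}){\bf v}_\rho$ give $E[Z_n]=\rho(\overline{\bf B})^n\,w^T{\bf v}_\rho$, while $Z_n$ is comparable to $\|{\bf B}_{x_1}\cdots{\bf B}_{x_n}\|$ because $w,{\bf v}_\rho>0$. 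To upgrade to a strict inequality I would argue one step at a time: $\zeta_n:=Z_n/\rho(\overline{\bf B})^n$ is a positive martingale, so its increments $\zeta_n/\zeta_{n-1}$ have conditional mean $1$; as a function of the last digit $x_n$ this increment is proportional to $p^T{\bf B}_{x_n}{\bf v}_\rho$ for a positive row vector $p$ determined by the past, and these $k$ values genuinely differ because the first columns $\binom{j+1}{1}$ of the ${\bf B}_j$ are distinct and $p$ has positive first coordinate (the first column of ${\bf B}_{x_1}\cdots{\bf B}_{x_{n-1}}$ is positive). Strict Jensen then gives $E[\log(\zeta_n/\zeta_{n-1})\mid x_1,\ldots,x_{n-1}]<0$. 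The delicate point is to make this gap uniform in $n$: the attainable directions $p$ remain in a compact subset of the interior of the positive projective cone, on which the Jensen gap is continuous and strictly positive, hence bounded below by some $\delta>0$. Summing yields $\tfrac1n E[\log\zeta_n]\leqslant-\delta+o(1)$, whence $\lambda-\log\rho(\overline{\bf B})=\lim\tfrac1n E[\log\zeta_n]\leqslant-\delta<0$, completing the proof. (Alternatively, one could cite a general strict mean-comparison theorem for i.i.d.\ products of distinct irreducible nonnegative matrices, but the martingale argument is self-contained and exposes exactly why distinctness of the digit matrices is what forces singularity.)
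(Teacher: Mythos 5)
Your proposal is correct in outline, but it reaches the key quantitative estimate by a genuinely different route from the paper. Both proofs share the same skeleton: reduce to the dilation-equation solution via Theorem~\ref{thm:cem}, note monotonicity gives a.e.\ differentiability, and kill the difference quotient along $y_n=x\pm k^{-(n+1)}$ using the bound $k^{n+1}\rho^{-n}\|{\bf B}_{x_1}\cdots{\bf B}_{x_n}\|$ on dyadic--$k$ cylinders. Where you diverge is in how you show this quantity tends to zero for a.e.\ $x$. The paper proves a general criterion (Theorem~\ref{thm:genmain}): it applies submultiplicativity, $\|{\bf B}_{x_1}\cdots{\bf B}_{x_n}\|\leqslant\prod_i\|{\bf B}_{x_i}\|$, invokes simple normality of a.e.\ $x$ to turn the product into $\bigl(\prod_j\|{\bf B}_j\|^{1/k}\bigr)^{n+o(n)}$, and then closes the argument by the concrete inequality \eqref{eq:agm}, which for the Zaremba matrices is verified by an explicit eigenvalue computation (the ${\bf B}_i$ are symmetric, so $\|{\bf B}_i\|=\tfrac{1}{2}\bigl(i+1+\sqrt{(i+1)^2+4}\,\bigr)$, and one checks $\prod_i\|{\bf B}_i\|^{1/k}<\rho/k$ directly). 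You instead invoke Furstenberg--Kesten and aim for the strict Lyapunov inequality $\lambda<\log(\rho/k)$, which you establish by a martingale/strict-Jensen argument on $Z_n=w^T{\bf B}_{x_1}\cdots{\bf B}_{x_n}{\bf v}_\rho$. The paper is aware of this alternative: the end of Section~\ref{sec:zaremba} records $\chi^{}_{\mathcal{B}}<\log(\rho/k)$ as a sufficient condition but declines to pursue it on the grounds that Lyapunov exponents are hard to compute; your contribution is to observe that one does not need to compute $\lambda$, only to beat $\log\rho(\overline{\bf B})$ strictly, and that strict Jensen plus distinctness of the digit matrices does this. Your route is sharper in principle (the Lyapunov condition is weaker than \eqref{eq:agm}, which can fail even for positive symmetric matrices as the paper's counterexample shows), at the cost of the compactness/uniformity argument for the Jensen gap; for the Zaremba matrices that step does go through, since $w>0$ and the normalized directions $w^T{\bf B}_{x_1}\cdots{\bf B}_{x_{n-1}}$ have coordinate ratio trapped in an interval of the form $[1/(k+2),1]$ after one step, so the projective orbit stays in a fixed compact subset of the open positive cone and the gap is uniformly bounded below. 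The paper's route buys an elementary, fully explicit verification; yours buys generality and isolates the structural reason (distinct digit matrices acting on a cone) for singularity.
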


Theorem \ref{thm:zsc} is an immediate consequence of the following more general result.

\begin{theorem}\label{thm:genmain} Let $k\geqslant 2$ be an integer and $f$ be a nonnegative $k$-regular sequence. Suppose that the spectral radius $\rho({\bf B})$ is the unique dominant eigenvalue of ${\bf B}$, that $$\rho=\rho({\bf B})>\rho^*(\{{\bf B}_0,\ldots,{\bf B}_{k-1}\})=\rho^*,$$ that for $n$ large enough $\varSigma(n)\neq 0$ and that the asymptotical behaviour of $\varSigma(n)$ is determined by $\rho({\bf B})$. If \begin{equation}\label{eq:agm}\prod_{j=0}^{k-1}\|{\bf B}_{j}\|^{1/k}<\frac{\rho}{k},\end{equation} then the ghost measure of $f$ is purely singular continuous.
\end{theorem}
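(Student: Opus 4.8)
The plan is to mirror Salem's differentiation argument from Proposition~\ref{prop:salemseq}(c), now carried out with the vector-valued solution $\mathbf{F}$ of the dilation equation \eqref{dilation} in place of the scalar function, and with the product of digit matrices replacing the scalar product of digits. The hypotheses on $\rho$ guarantee, via Theorem~\ref{thm:dilationtoifs}, that $\mathbf{F}$ exists and is continuous (hence bounded) on $[0,1]$, and, via Theorem~\ref{thm:cem}, that the ghost distribution $G(x):=\mu_f([0,x])$ satisfies \eqref{eq:mufaffine}; explicitly $G(x)=c_1\,g\bigl(\tfrac{1+(k-1)x}{k}\bigr)+c_0$ for constants $c_0,c_1$, where $g:=e_1^{T}\mathbf{F}$. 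Since these are exactly the hypotheses under which \cite{CEM} establish existence and continuity of $\mu_f$, the function $G$ is a continuous nondecreasing distribution function, so $G'$ exists almost everywhere; if in addition $G'=0$ almost everywhere, then the absolutely continuous part of $\mu_f$ vanishes and, being atomless, $\mu_f$ is purely singular continuous. The goal is therefore to prove $G'=0$ almost everywhere. As the affine change of variable $x\mapsto u=\tfrac{1+(k-1)x}{k}$ is an increasing bijection preserving Lebesgue-null sets, and since at any point where $G'$ exists one has $\tfrac{G(x)-G(y)}{x-y}=c_1\tfrac{k-1}{k}\cdot\tfrac{g(u)-g(v)}{u-v}$, it suffices to exhibit, for almost every simply normal $u\in[\tfrac1k,1]$, a sequence $v_n\to u$ along which the difference quotients of $g$ at $u$ tend to $0$.

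First I would iterate the self-similarity relation \eqref{fjxV} obtained in the proof of Theorem~\ref{thm:dilationtoifs}. For $u=0.u_1u_2\cdots$ in base $k$, unwinding \eqref{fjxV} $n$ times yields $\mathbf{F}(u)=\rho^{-n}\,\mathbf{B}_{u_1}\cdots\mathbf{B}_{u_n}\mathbf{F}(\sigma^{n}u)+\mathbf{c}_n(u_1,\ldots,u_n)$, where $\sigma$ is the shift on digit strings and the additive term $\mathbf{c}_n$ depends only on the first $n$ digits. Consequently, whenever $u$ and $v$ share their first $n$ base-$k$ digits, the additive terms cancel and
\[
\mathbf{F}(u)-\mathbf{F}(v)=\rho^{-n}\,\mathbf{B}_{u_1}\cdots\mathbf{B}_{u_n}\bigl(\mathbf{F}(\sigma^{n}u)-\mathbf{F}(\sigma^{n}v)\bigr).
\]
Bounding the last factor by a constant $C:=2\sup_{[0,1]}\|\mathbf{F}\|$ (using continuity of $\mathbf{F}$) and invoking submultiplicativity of the operator norm, I obtain $\|\mathbf{F}(u)-\mathbf{F}(v)\|\leqslant C\,\rho^{-n}\prod_{i=1}^{n}\|\mathbf{B}_{u_i}\|$, and hence $|g(u)-g(v)|\leqslant\|\mathbf{F}(u)-\mathbf{F}(v)\|$ is bounded by the same quantity.

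Next I would run Salem's perturbation. Fix a base-$k$ simply normal $u\in[\tfrac1k,1]$ (these form a set of full measure) and set $v_n=u+\varepsilon_n k^{-(n+1)}$, where $\varepsilon_n=+1$ if $u_{n+1}=0$ and $\varepsilon_n=-1$ otherwise, so that $u$ and $v_n$ agree in their first $n$ digits and $|u-v_n|=k^{-(n+1)}$. The estimate above then gives
\[
\frac{|g(u)-g(v_n)|}{|u-v_n|}\leqslant Ck\left(\frac{k}{\rho}\right)^{n}\prod_{i=1}^{n}\|\mathbf{B}_{u_i}\|.
\]
Because $u$ is simply normal, each digit $j$ occurs among $u_1,\ldots,u_n$ exactly $\tfrac{n}{k}+r_j(n)$ times with $r_j(n)=o(n)$, whence $\prod_{i=1}^{n}\|\mathbf{B}_{u_i}\|=\bigl(\prod_{j=0}^{k-1}\|\mathbf{B}_j\|^{1/k}\bigr)^{n}\theta_n$, where $\theta_n=\prod_{j=0}^{k-1}\|\mathbf{B}_j\|^{r_j(n)}$ satisfies $\theta_n^{1/n}\to1$. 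The difference quotient is therefore at most $Ck\,\theta_n\bigl(\tfrac{k}{\rho}\prod_{j=0}^{k-1}\|\mathbf{B}_j\|^{1/k}\bigr)^{n}$, and by hypothesis \eqref{eq:agm} the base $\tfrac{k}{\rho}\prod_{j}\|\mathbf{B}_j\|^{1/k}$ is strictly less than $1$, so this bound tends to $0$ as $n\to\infty$. Since $v_n\to u$, the derivative of $g$ vanishes wherever it exists at such $u$, and transporting this through the affine relation \eqref{eq:mufaffine} gives $G'=0$ almost everywhere, completing the argument sketched in the first paragraph.

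The hard part will be controlling the subexponential factor $\theta_n$ arising from the deviation of the empirical digit frequencies from $1/k$: one must check that the $o(n)$ error in each digit count produces only a factor with $\theta_n^{1/n}\to1$, so that it cannot overcome the strict exponential decay furnished by \eqref{eq:agm}. A secondary, milder point is the verification that the crude submultiplicative bound $\|\mathbf{B}_{u_1}\cdots\mathbf{B}_{u_n}\|\leqslant\prod_i\|\mathbf{B}_{u_i}\|$—rather than a sharper joint spectral radius estimate—is the right tool here, since the hypothesis \eqref{eq:agm} is itself phrased through the product of the individual operator norms and thus matches the submultiplicative bound exactly.
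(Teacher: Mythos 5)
Your proposal is correct and follows essentially the same route as the paper: reduce via the affine relation \eqref{eq:mufaffine} to the solution of the dilation equation, use monotonicity to get almost-everywhere differentiability, and then run Salem's simply-normal perturbation argument, bounding the difference quotient by $\bigl(k\rho^{-1}\prod_{j}\|{\bf B}_j\|^{1/k}\bigr)^{n}$ times a subexponential correction from the $o(n)$ digit-count errors, which \eqref{eq:agm} forces to zero. The only cosmetic difference is that you bound $\|{\bf F}(\sigma^n u)-{\bf F}(\sigma^n v)\|$ by $2\sup\|{\bf F}\|$ where the paper uses $\|{\bf v}_\rho\|$ directly; the paper's ``hard part'' you flag (controlling $\theta_n$) is handled there exactly as you propose.
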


\begin{proof} Since the ghost distribution of such a sequence is an affine transformation of a related attractor, it is enough to prove that this attractor is singular continuous. Note also that since $f$ is nonnegative, its ghost distribution and attractor are both increasing functions on $[0,1]$, and so of bounded variation. Thus they are almost everywhere differentiable. 

Denote the points of the curve defined by the first two coordinates of the solution of the related dilation equation or attractor by $(x,S(x))$. As in the proof of Theorem~\ref{prop:salemseq} above, we let $x\in[0,1]$ be a simply normal number with base-$k$ expansion $$(x)_k=0.x_1x_2x_3\cdots.$$ We remind the reader that for such $x$ and any $j\in\{0,1,\ldots,k-1\}$, we have that the number of $x_i=j$ with $i$ up to $n$ is $n/k+o(n)$. Now, for each $n\geqslant 0$, set $$y_n=x+\frac{b_{n+1}}{k^{n+1}},\quad\mbox{where}\quad b_{n+1}=\begin{cases}1 &\mbox{if $x_{n+1}=0$}\\ -1 &\mbox{otherwise}.\end{cases}$$ The first $n$ bits in the binary expansion of $y_n$ agree with those of $x$, thus 
\begin{align*}
\left|\frac{S(x)-S(y_n)}{x-y_n}\right|
&<k^{n+1}\cdot|e_1^T(\rho^{-1}{\bf B}_{x_1})(\rho^{-1}{\bf B}_{x_2})\cdots(\rho^{-1}{\bf B}_{x_n}){\bf v}_\rho|\\
&<(k\rho^{-1})^{n}\cdot\|{\bf B}_{x_1}{\bf B}_{x_2}\cdots{\bf B}_{x_n}\|\cdot k\cdot \|e_1^T\|\cdot \|{\bf v}_\rho\| \\
&\leqslant \left(k\rho^{-1}\prod_{j=0}^{k-1}\|{\bf B}_{j}\|^{1/k}\right)^{n}\cdot  k\cdot \|{\bf v}_\rho\|\left(\prod_{j=0}^{k-1}\|{\bf B}_{j}\|\right)^{|r(n)|},
\end{align*} 
where $|r(n)|=o(n)$ as $n\to\infty$, and, as it is throughout this paper, $\|\cdot\|$ is the operator norm for a matrix and the standard Euclidean norm for a vector. But since we have assumed that \eqref{eq:agm} holds, there is a constant $c<1$ such that $$\left|\frac{S(x)-S(y_n)}{x-y_n}\right|<c^n\cdot k\cdot \|{\bf v}\|\left(\prod_{j=0}^{k-1}\|{\bf B}_{j}\|\right)^{|r(n)|}=o(1).$$ Since the derivative of $S(x)$ exists almost everywhere, by the above, it is is zero for Lebesgue-almost all $x\in[0,1]$; that is, $S(x)$, and therefore $\mu_{z_k}([0,x])$, is singular continuous.
\end{proof}

Recall that $$\rho=\rho({\bf B})=\rho({\bf B}_0+{\bf B}_1+\cdots+{\bf B}_{k-1})\leqslant \|{\bf B}_0+{\bf B}_1+\cdots+{\bf B}_{k-1}\|.$$ Thus, using submultiplicativity of the operator norm for matrices, if \eqref{eq:agm} holds, it implies that $$\|{\bf B}_{0}{\bf B}_{1}\cdots{\bf B}_{k-1}\|^{1/k}< \frac{1}{k}\cdot\|{\bf B}_0+{\bf B}_1+\cdots+{\bf B}_{k-1}\|,$$ which is exactly the arithmetic-geometric mean inequality for the operator norm of matrices. That is, \eqref{eq:agm} is a strong version of the arithmetic-geometric mean inequality (recall that our assumptions imply that the ${\bf B}_i$ are not all equal). As we shall see, in the case of the Zaremba sequence $z_k$ such an inequality holds.

\begin{proof}[Proof of Theorem \ref{thm:zsc}] Since the matrices $${\bf B}_i=\left(\begin{matrix} i+1 & 1\\ 1& 0\end{matrix}\right),$$ for the Zaremba sequence are all symmetric and nonnegative, we have that the operator norm is equal to the largest eigenvalue. Thus $$\|{\bf B}_i\|=\frac{i+1+\sqrt{(i+1)^2+4}}{2}\qquad\mbox{and}\qquad \rho=\frac{k(k+1)+k\sqrt{(k+1)^2+16}}{4}.$$ Since $k\geqslant 2$, we have $$\prod_{i=0}^{k-1}\|{\bf B}_{i}\|^{1/k}=\frac{1}{2}\prod_{i=1}^k\left(i+\sqrt{i^2+4}\right)^{1/k}<\frac{(k+1)+\sqrt{(k+1)^2+16}}{4}=\frac{\rho}{k}.$$ An application of Theorem \ref{thm:genmain} finishes the proof.
\end{proof}

Note that \eqref{eq:agm} does not hold in general. It is not even true for strictly positive symmetric matrices. For a counterexample, note that 
$$\left\|\left(\begin{matrix} 1&1\\ 1&1\end{matrix}\right)\right\|^{1/2}\left\|\left(\begin{matrix} 2&1\\ 1&1\end{matrix}\right)\right\|^{1/2}\hspace{-.1cm}=\left(3+\sqrt{5}\right)^{1/2}\hspace{-.1cm}>2.288>\frac{5+\sqrt{17}}{4}=\frac{1}{2}\cdot\rho\left(\left(\begin{matrix} 3&2\\ 2&2\end{matrix}\right)\right)\hspace{-.1cm}.$$

Singularity criteria which depend on scaling inequalities like in Theorem~\ref{thm:genmain} are prevalent in spectral theory. As an example, for a primitive substitution with inflation factor $\theta$, if the associated Lyapunov exponent $\chi$ is strictly less than $\log\sqrt{\theta}$, the spectrum of the system is purely singular. 

Note that from the matrix semigroup $\langle\mathcal{B}\rangle$, under some natural assumptions on the matrices ${\bf B}_i$, one can associate to it a Lyapunov exponent $\chi^{ }_{\mathcal{B}}$  given by
\[
\chi^{ }_{\mathcal{B}}=\lim_{n\to\infty}\frac{1}{n}\log\|{\bf B}_{i_{n-1}}\cdots {\bf B}_{i_0}\|,
\]
which, by the seminal result of Furstenberg and Kesten \cite{FK1960}, exists and is constant for almost every sequence $(i_n)_{n\geqslant 0}\in \Sigma=\left\{0,\ldots,k-1\right\}^{\mathbb{N}}$. Trivially, $\ee^{\chi^{ }_{\mathcal{B}}}\leqslant \rho^{\ast}$. Another sufficient condition for the singularity of the ghost measure is that $\chi^{ }_{\mathcal{B}}<\log(\rho/k)$. We note, however, that Lyapunov exponents are difficult to compute in general---the conditions in Theorem~\ref{thm:genmain} are comparatively easier to verify for concrete examples.

\section{Concluding remarks}\label{sec:conclusion}

In this paper, we connected the ghost measure of a regular sequence to an attractor of an iterated function system of affine contractions. We then proved that certain ghost measures are singular continuous by showing that the related attractor is a singular continuous curve. Our results rested on the set of given digit matrices satisfying a strong version of the arithmetic-geometric mean inequality; see inequality \eqref{eq:agm}. This is used to show that, in the fundamental region $\bigl[k^m,k^{m+1}\bigr)$, most values of a regular sequence are smaller than the average value of the sequence. This means that there are only a small number of values that are pushing up the average. In fact, it is not hard to show that the maximal values of a regular sequence have density zero in each fundamental region.

The following proposition is a generalisation of a result of Coons and Spiegelhofer \cite[Prop.~2.3.20]{CS2018} originally proved in the special case where $f$ is the Stern sequence.

\begin{proposition}\label{prop:smallgrowth}
Let $k\geqslant 2$ be an integer and $f$ be a $k$-regular sequence. Suppose that the spectral radius $\rho({\bf B})$ is the unique dominant eigenvalue of ${\bf B}$, that $$\rho=\rho({\bf B})>\rho^*(\{{\bf B}_0,\ldots,{\bf B}_{k-1}\})=\rho^*>\rho/k,$$ that for $n$ large enough $\varSigma(n)\neq 0$ and that the asymptotical behaviour of $\varSigma(n)$ is determined by $\rho({\bf B})$. For each $m\geqslant 0$, let $g_m$ be the function defined on $[0,1]$ by $$g_m(x)=\frac 1{(\rho^*)^m}\cdot f\bigl(k^m+\bigl\lfloor k^m(k-1)x\bigr\rfloor\bigr).$$ The sequence $\{g_m(x)\}_{m\geqslant 0}$ of functions converges to zero for $\lambda$-almost all $x$ in $[0,1]$.
\end{proposition}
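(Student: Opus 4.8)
The plan is to show that the step functions $g_m$ are summably small in $L^1([0,1])$ and then to deduce almost-everywhere convergence to zero by a Tonelli (equivalently Borel--Cantelli) argument; the self-affine/dilation-equation machinery plays no role here. The starting observation is that $g_m$ is piecewise constant: as $x$ ranges over $[0,1)$, the integer $\lfloor k^m(k-1)x\rfloor$ takes each value $j\in\{0,\ldots,k^m(k-1)-1\}$ on an interval of length $1/(k^m(k-1))$, so that $k^m+\lfloor k^m(k-1)x\rfloor$ runs exactly once through the fundamental block $[k^m,k^{m+1})$. Integrating,
$$\int_0^1 |g_m(x)|\,dx=\frac{1}{(\rho^*)^m\,k^m(k-1)}\sum_{N=k^m}^{k^{m+1}-1}|f(N)|.$$

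Next I would estimate the block sum. In the nonnegative setting the sum of absolute values over the block is just $\varSigma(m)$, and feeding the linear representation \eqref{eq:linear} together with $\sum_{a=0}^{k-1}{\bf B}_a={\bf B}$ into the definition of $\varSigma$ collapses the $m$ free low-order digits into powers of ${\bf B}$, giving $\varSigma(m)={\bf w}({\bf B}-{\bf B}_0){\bf B}^{m}e_1$. Because $\rho$ is the simple dominant eigenvalue of ${\bf B}$ and, by hypothesis, the growth of $\varSigma(m)$ is governed by $\rho$, this yields $\varSigma(m)=O(\rho^m)$ (in fact $\varSigma(m)\sim c\,\rho^m$ for some $c>0$). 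Substituting back,
$$\int_0^1 |g_m(x)|\,dx=O\!\left(\Big(\tfrac{\rho}{\rho^*k}\Big)^{m}\right).$$

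The decisive point is the hypothesis $\rho^*>\rho/k$: it forces $\rho/(\rho^*k)<1$, so $\sum_{m\geqslant0}\int_0^1|g_m|\,dx<\infty$. By Tonelli's theorem $\int_0^1\sum_{m\geqslant0}|g_m(x)|\,dx<\infty$, whence $\sum_{m\geqslant0}|g_m(x)|<\infty$ for $\lambda$-almost every $x$, and in particular $g_m(x)\to0$ almost everywhere. One may argue equivalently with Markov's inequality, $\lambda\{x:|g_m(x)|>\varepsilon\}\leqslant\varepsilon^{-1}\int_0^1|g_m|$: the right-hand sides sum to a finite total for each fixed $\varepsilon$, so Borel--Cantelli shows $|g_m(x)|>\varepsilon$ only finitely often for a.e.\ $x$, and letting $\varepsilon$ run through $1/\ell$, $\ell\in\mathbb{N}$, gives the conclusion.

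The one genuinely delicate ingredient is the control of the block sum $\sum_{N=k^m}^{k^{m+1}-1}|f(N)|$. For a nonnegative sequence---the case relevant to this paper and to the Stern-sequence prototype---this is effortless, since the sum equals $\varSigma(m)$ and the dominant-eigenvalue hypothesis pins its growth to $\rho^m$. For a signed regular sequence the crude submultiplicative bound $\sum_N\|{\bf B}_{d_m}\cdots{\bf B}_{d_0}\|\leqslant(\sum_a\|{\bf B}_a\|)^{m+1}$ is too wasteful, because $\sum_a\|{\bf B}_a\|$ can exceed $\rho^*k$; establishing that the sum of absolute values over a fundamental block still grows like $\rho^m$ (up to subexponential factors) is exactly the point that would require a separate argument. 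Everything else is elementary and uses only the first-moment computation above and the strict inequality $\rho^*>\rho/k$.
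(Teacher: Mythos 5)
Your proof is correct and follows essentially the same route as the paper's: a first-moment (Markov) bound comparing the block sum $\varSigma(m)\sim c\rho^m$ against the normalisation $(\rho^*)^m k^m(k-1)$, so that the hypothesis $\rho^*>\rho/k$ makes $\lambda\{x: g_m(x)\geqslant\varepsilon\}\ll \varepsilon^{-1}\bigl(\rho/(\rho^*k)\bigr)^m$ summable, followed by a Borel--Cantelli argument. The only cosmetic difference is that the paper routes the estimate through the block maximum $M_m$ before counting exceptional integers, whereas you work directly with the $L^1$ norm of $g_m$; both arguments likewise rely on the nonnegativity of $f$, which you flag explicitly and which the paper also invokes without listing it among the hypotheses.
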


\begin{proof}  
By assumption, the sum of $f$ over the interval $\bigl[k^m,k^{m+1}\bigr)$ is asymptotic to $c_1\rho^m$ for some $c_1>0$. Set $M_m:=\max_{n\in[k^m,k^{m+1})}f(n)$. To prove the proposition, we need to show that there are exponentially few integers $n$ in $\bigl[k^m,k^{m+1}\bigr)$ such that $f(n)\geqslant \varepsilon M_m$,
for any given $\varepsilon>0$. By the nonnegativity of $f$, the number $N$ of such integers satisfies $N\varepsilon M_m\leqslant c_1\rho^m$, therefore $$N\leqslant c_1\rho^m/(M_m\varepsilon)\ll (\rho/\rho^*)^m/\varepsilon.$$
Since $\rho^*>\rho/k$, there are exponentially few integers $n$ such that $f(n)$ is large; in particular, there is a $K<1$ such that
$\lambda(\{x\in[0,1]:g_m(x)\geqslant \varepsilon\})\leqslant K^m/\varepsilon.$
It follows that
\begin{multline*}
\lambda\bigl(\{x\in[0,1]:\exists m\geqslant M\textrm{ such that }g_m(x)\geqslant \varepsilon\}\bigr)\\ 
=\lambda\left(\bigcup_{m\geqslant M}\{x\in[0,1]:g_m(x)\geqslant \varepsilon\}\right) 
\leqslant \sum_{m\geqslant M}\lambda\bigl(\{x\in[0,1]:g_m(x)\geqslant \varepsilon\}\bigr)\\ 
\leqslant \frac 1{\varepsilon}\sum_{m\geqslant M}K^m
=\frac 1\varepsilon\cdot\frac{K^M}{1-K}.
\end{multline*}
Thus
$$\lambda\bigl(\{x\in[0,1]:g_m(x)<\varepsilon\textrm{ for all }m\geqslant M\}\bigr)\geqslant 1-\frac{1}{\varepsilon}\cdot\frac{K^M}{1-K},$$
so that
\begin{align*}
1=\lambda\Biggl(\bigcup_{M\geqslant 1}\{x\in[0,1]:g_m(x)<\varepsilon\textrm{ for all }m\geqslant M\}\Biggr)=\lambda(A_\varepsilon),
\end{align*}
where $A_\varepsilon=\{x\in[0,1]:\exists M\geqslant 1\textrm{ such that $g_m(x)<\varepsilon$ for all $m\geqslant M$}\}.$
Therefore
\begin{equation*}
\lambda\bigl(\{x\in[0,1]:g_m(x)\rightarrow 0\textrm{ as }m\rightarrow\infty\}\bigr)
=\lambda\Biggl(\bigcap_{\varepsilon>0}A_\varepsilon\Biggr)
=\lambda\Biggl(\bigcap_{n\geqslant 1}A_{1/n}\Biggr)
=1.\qedhere
\end{equation*}
\end{proof}

The following result is an immediate corollary.

\begin{corollary}\label{cor:smallgrowth} For Lebesgue-almost all $(x)_2=0.x_1x_2x_3\cdots\in[0,1]$, we have $$\lim_{n\to\infty}\frac{1}{(\rho^*)^n}\cdot\|{\bf B}_{x_1}{\bf B}_{x_2}\cdots{\bf B}_{x_n}\|=0.$$
\end{corollary}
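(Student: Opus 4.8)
The plan is to translate the statement about operator norms of the products ${\bf B}_{x_1}\cdots{\bf B}_{x_n}$ into a statement about the values of the basis sequences $f_1,\dots,f_d$, and then to invoke Proposition~\ref{prop:smallgrowth} coordinatewise. Working with the two digit matrices ${\bf B}_0,{\bf B}_1$, observe that for $(x)_2=0.x_1x_2\cdots$ the integer $N_n:=2^n+\lfloor 2^n x\rfloor$ appearing in Proposition~\ref{prop:smallgrowth} has binary expansion $1x_1x_2\cdots x_n$. Hence, writing $M_n:={\bf B}_{x_1}{\bf B}_{x_2}\cdots{\bf B}_{x_n}$, the linear representation \eqref{eq:linear} gives ${\bf f}(N_n)={\bf w}\,{\bf B}_1M_n$, so that
$$f_i(N_n)={\bf u}^{T}M_ne_i\quad(i=1,\dots,d),\qquad {\bf u}^{T}:={\bf w}\,{\bf B}_1={\bf f}(1).$$
In other words, the row vector ${\bf f}(N_n)={\bf u}^{T}M_n$ records $d$ linear functionals of the product $M_n$, each equal to a value of one of the sequences $f_i$ in the fundamental region $[2^n,2^{n+1})$. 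This dictionary is the only place where \eqref{eq:linear} is used.

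Next I would apply Proposition~\ref{prop:smallgrowth} to each $f_i$ in turn. Each $f_i$ is a nonnegative $2$-regular sequence sharing the digit matrices ${\bf B}_0,{\bf B}_1$, hence the same $\rho$, $\rho^{*}$ and ${\bf B}$; in the primitive setting its partial sums $\varSigma(n)$ are positive and asymptotic to a constant times $\rho^{n}$ by \cite[Thm.~1]{CEM}, so the hypotheses of Proposition~\ref{prop:smallgrowth} hold for every $f_i$. Intersecting the $d$ full-measure sets so obtained yields a single full-measure set of $x$ on which $(\rho^{*})^{-n}f_i(N_n)\to 0$ simultaneously for all $i$. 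To pass from these functionals to the full operator norm, I would use that $M_n$ is nonnegative, so $\|M_n\|\leqslant {\bf 1}^{T}M_n{\bf 1}$, and then dominate the sum of entries by the accessible quantities: provided ${\bf u}={\bf f}(1)$ is strictly positive with entries $u_j$, one has ${\bf 1}^{T}M_n{\bf 1}\leqslant (\min_j u_j)^{-1}{\bf u}^{T}M_n{\bf 1}=(\min_j u_j)^{-1}\sum_{i=1}^{d}f_i(N_n)$. Dividing by $(\rho^{*})^{n}$ and letting $n\to\infty$ then gives $(\rho^{*})^{-n}\|M_n\|\to 0$ for almost every $x$, as required.

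The crux, and the only step needing real care, is this upgrade from a single linear functional to the whole matrix, which rests on strict positivity of the left weight ${\bf u}={\bf f}(1)$; this is precisely where primitivity must enter. If ${\bf f}(1)$ is not already positive, I would instead prepend a fixed digit word $v_1\cdots v_\ell$ with ${\bf B}_{v_1}\cdots{\bf B}_{v_\ell}>0$, which exists because ${\bf B}$ is primitive, so that the new left weight ${\bf w}\,{\bf B}_{v_1}\cdots{\bf B}_{v_\ell}$ becomes strictly positive (when ${\bf w}\geqslant 0$) at the cost of prescribing finitely many leading bits. The density count underlying Proposition~\ref{prop:smallgrowth} is unaffected by conditioning on such a fixed prefix beyond a bounded multiplicative constant, so the conclusion survives. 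I expect securing this positive accessible weight to be the main obstacle; the remaining ingredients---the dictionary \eqref{eq:linear}, the coordinatewise application of the proposition, and the elementary bound $\|M_n\|\leqslant{\bf 1}^{T}M_n{\bf 1}$---are routine.
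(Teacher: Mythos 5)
The paper gives no argument here beyond the phrase ``immediate corollary'', and the reason it is immediate is that the proof of Proposition~\ref{prop:smallgrowth} reruns verbatim at the level of matrix products: since the ${\bf B}_a$ are entrywise nonnegative, $\sum_{|w|=n}\|{\bf B}_{w_1}\cdots{\bf B}_{w_n}\|\leqslant {\bf 1}^{T}{\bf B}^{n}{\bf 1}\ll\rho^{n}$, so by Markov the number of words $w$ of length $n$ with $\|{\bf B}_w\|\geqslant\varepsilon(\rho^*)^n$ is $\ll(\rho/\rho^*)^n/\varepsilon$, the corresponding set of $x$ has measure $\ll(\rho/(k\rho^*))^n/\varepsilon$ with $\rho/(k\rho^*)<1$, and Borel--Cantelli finishes. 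You instead try to deduce the corollary from the \emph{statement} of Proposition~\ref{prop:smallgrowth}, applied to each basis sequence $f_i$, and then to upgrade the $d$ functionals ${\bf u}^{T}M_ne_i$ to the full norm $\|M_n\|$. Your dictionary ${\bf f}(N_n)={\bf w}{\bf B}_1M_n$ is correct, and the upgrade does work \emph{when} the accessible left weight is strictly positive; but this detour through the linear representation creates the very obstacle you then have to fight, and it is avoidable.

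The genuine gap is in your fallback for securing a positive left weight. You assert that because ${\bf B}$ is primitive there is a word $v_1\cdots v_\ell$ with ${\bf B}_{v_1}\cdots{\bf B}_{v_\ell}>0$. That deduction is invalid: positivity of the sum $\sum_a{\bf B}_a$ says nothing about positivity of any individual product of the (merely nonnegative) ${\bf B}_a$. For instance, with ${\bf B}_0=\left(\begin{smallmatrix}1&0\\0&1\end{smallmatrix}\right)$ and ${\bf B}_1=\left(\begin{smallmatrix}0&1\\1&0\end{smallmatrix}\right)$ one has ${\bf B}>0$ while every finite product is a permutation matrix, never positive; the same phenomenon occurs for nontrivial examples, so ``primitive'' in the sense of Definition~\ref{def:prim} does not deliver the positive prefix you need, and without it the functionals $f_i(N_n)={\bf u}^{T}M_ne_i$ lose all information about the rows of $M_n$ indexed by zero entries of ${\bf u}$, so the bound $\|M_n\|\ll\sum_if_i(N_n)$ fails. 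A secondary unaddressed point is that applying Proposition~\ref{prop:smallgrowth} to each $f_i$ requires each $f_i$ to be nonnegative with $\varSigma_i(n)\asymp\rho^n$; this is fine if the basis is drawn from the $k$-kernel of a primitive $f$, but it is not automatic from the paper's setup and should be said. The direct first-moment argument on $\|{\bf B}_{x_1}\cdots{\bf B}_{x_n}\|$ sketched above needs none of this and is the intended proof.
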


\noindent Unfortunately, the result of Corollary~\ref{cor:smallgrowth} is not strong enough to replace \eqref{eq:agm}. 

Note that in Proposition \ref{prop:smallgrowth}, we assumed that $\rho^*>\rho/k$. The borderline case when $\rho^{\ast}=\rho/k$ is interesting. In particular, this implies that the lower local dimension $\underline{\textnormal{dim}}(\mu,x)$ of the ghost measure $\mu$ is $1$ for $\mu$-almost every $x\in \mathbb{T}$; compare \cite[Cor.~3]{CEM}. An example is the Josefus sequence, which has an absolutely continuous ghost measure; see \cite{Epre}. But, having lower local dimension $1$ does not suffice to conclude that $\mu$ is absolutely continuous; see for example \cite{Varju} in the context of Bernoulli convolutions. 

As a final comment, we draw the reader to the similarities of the ghost distributions of the Salem sequence with digits $(b_0,b_1)=(2,3)$ and the Zaremba sequence $z_2$; see Figure \ref{fig:SandZ}. Of course, even a very careful reader could look at the two distributions in Figure \ref{fig:SandZ} and believe them the same---such a reader may be forgiven. In fact, these curves are remarkably close to one another---note the scale on the graphs. Their difference is plotted as a function of $x$ in Figure \ref{fig:SminusZ} (right) alongside the difference of their associated attractors (left). Exact agreement occurs at the points $(0, 0)$, $(1/2, 2/5)$  and $(1, 1)$ by design, and there appears to be two more points of equality, roughly at $x=5/12$ and $x=2/3$.

\begin{figure}[ht]
\begin{center}
  \includegraphics[width=.49\linewidth]{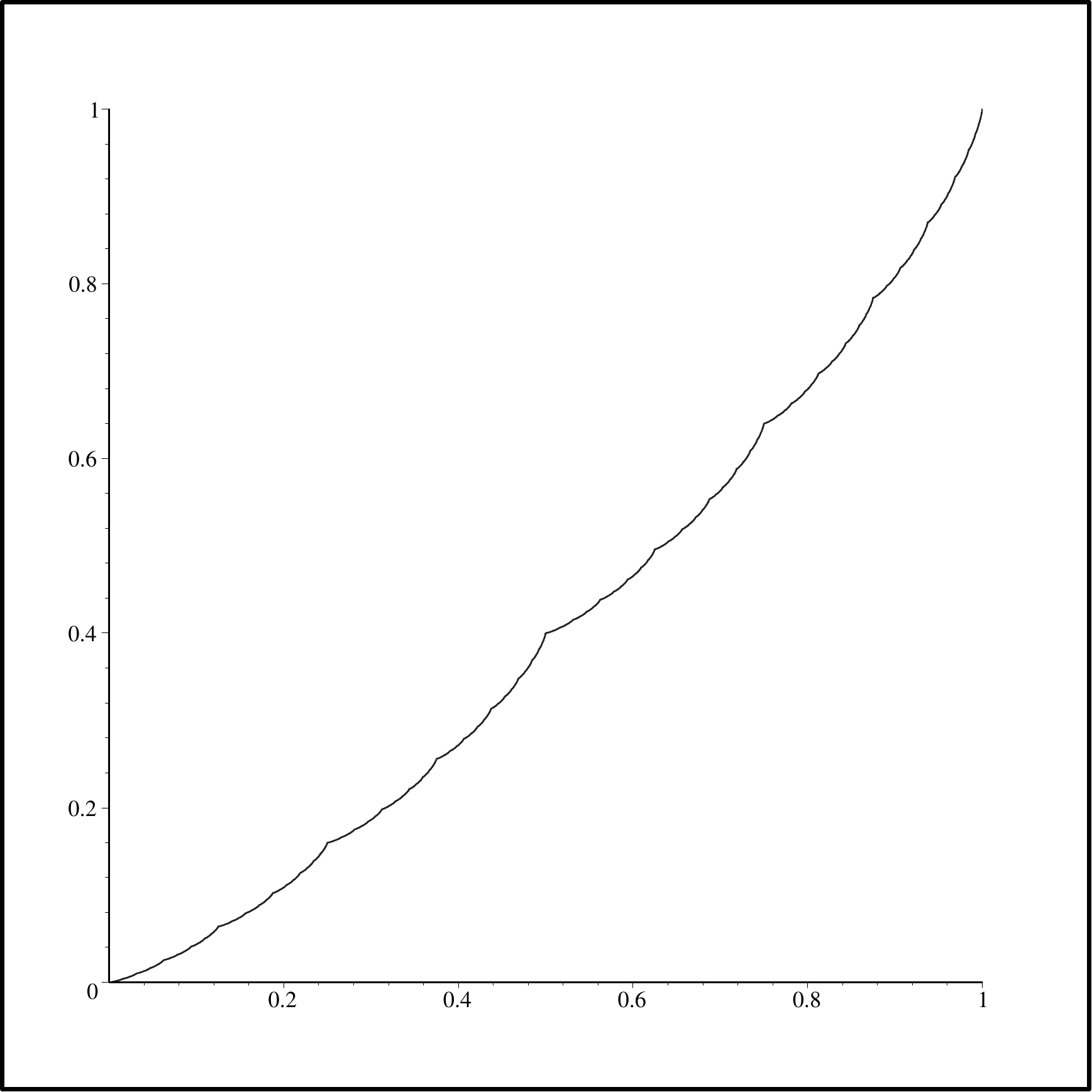}
  \includegraphics[width=.49\linewidth]{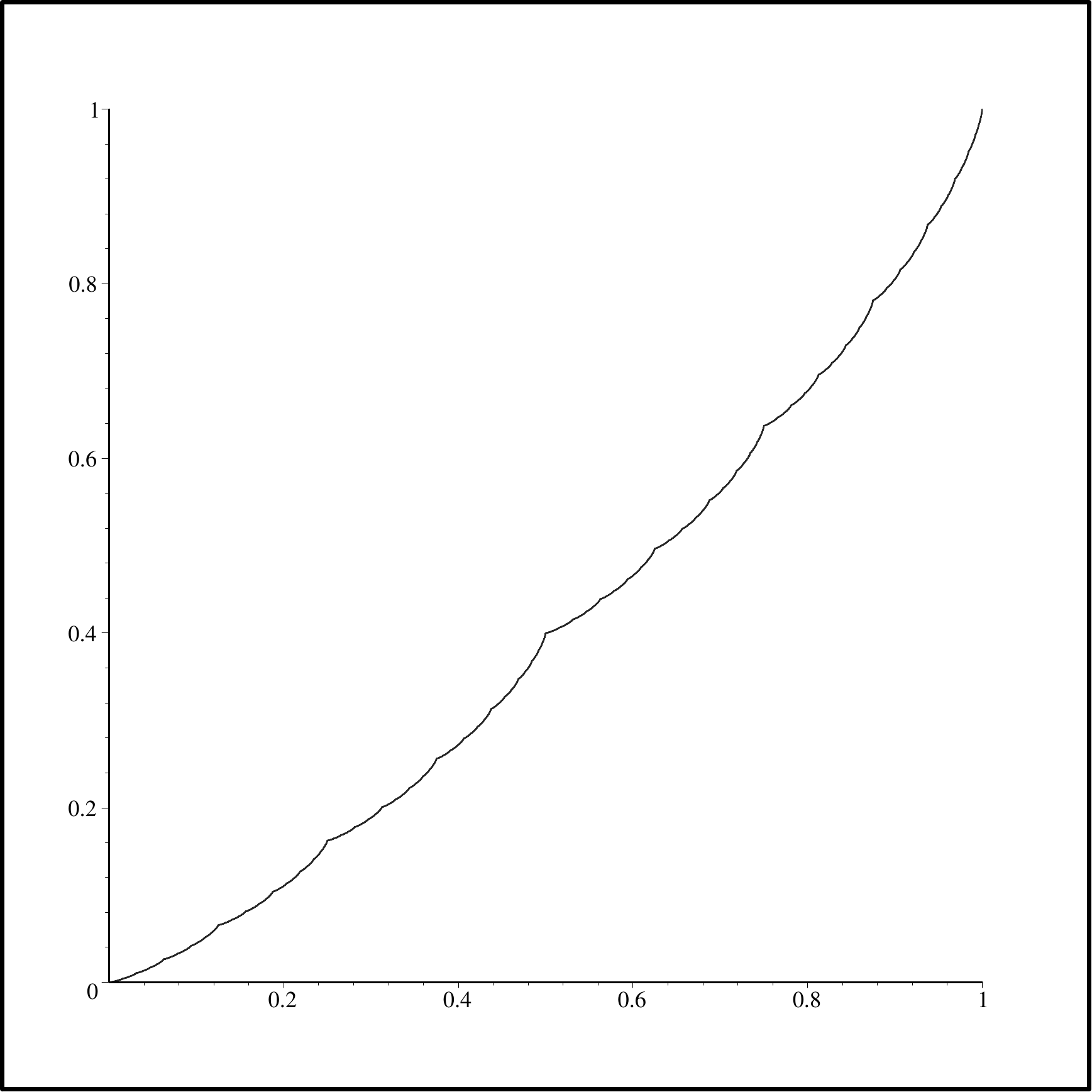} 
  \end{center}
\caption{\small The ghost distributions of the $2$-regular Salem sequence with $(b_0,b_1)=(2,3)$ (left) and the Zaremba sequence $z_2$ (right), which, while different, look remarkably similar to the naked eye.}
\label{fig:SandZ}
\end{figure}


\begin{figure}[ht]
\begin{center}
  \includegraphics[width=.49\linewidth]{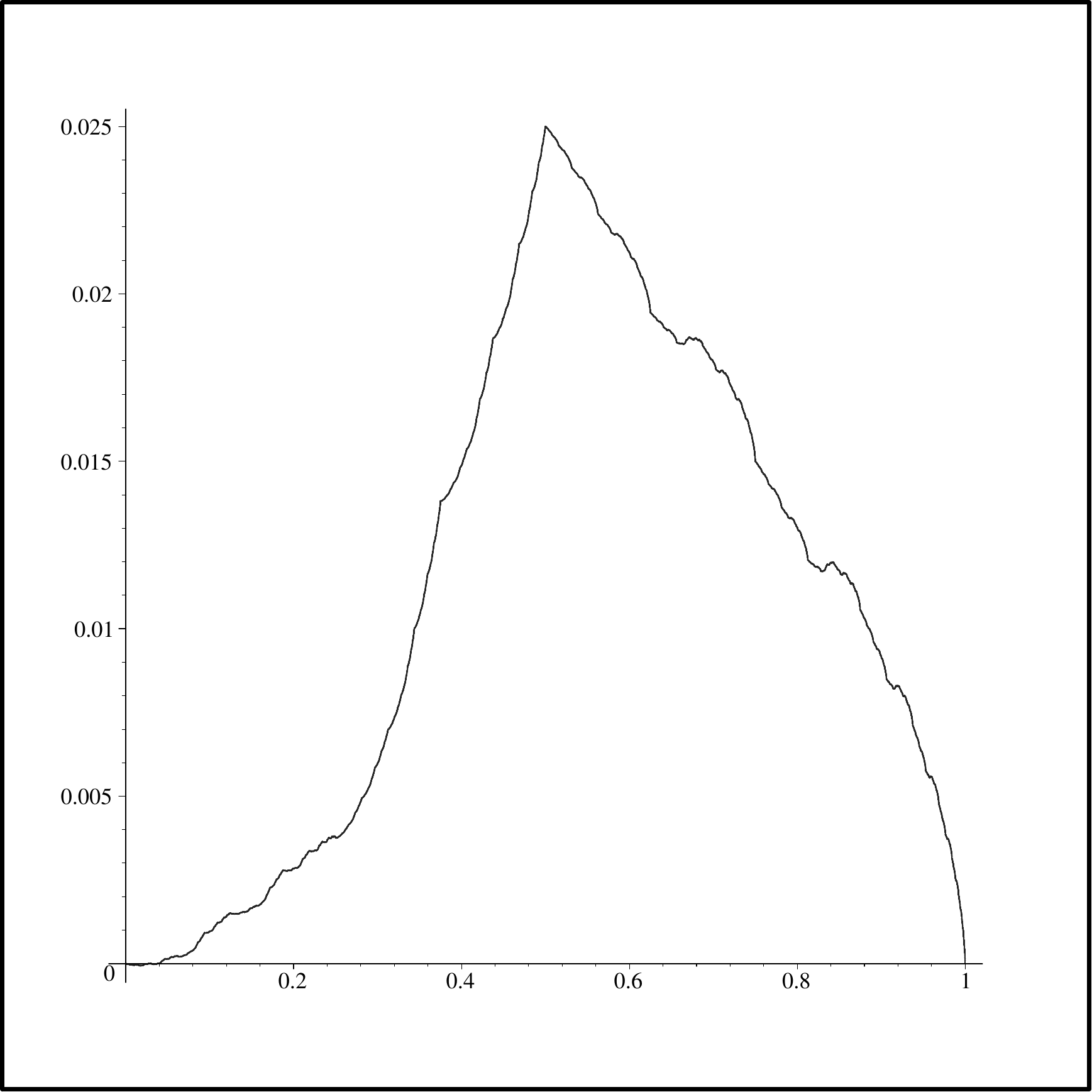}
  \includegraphics[width=.49\linewidth]{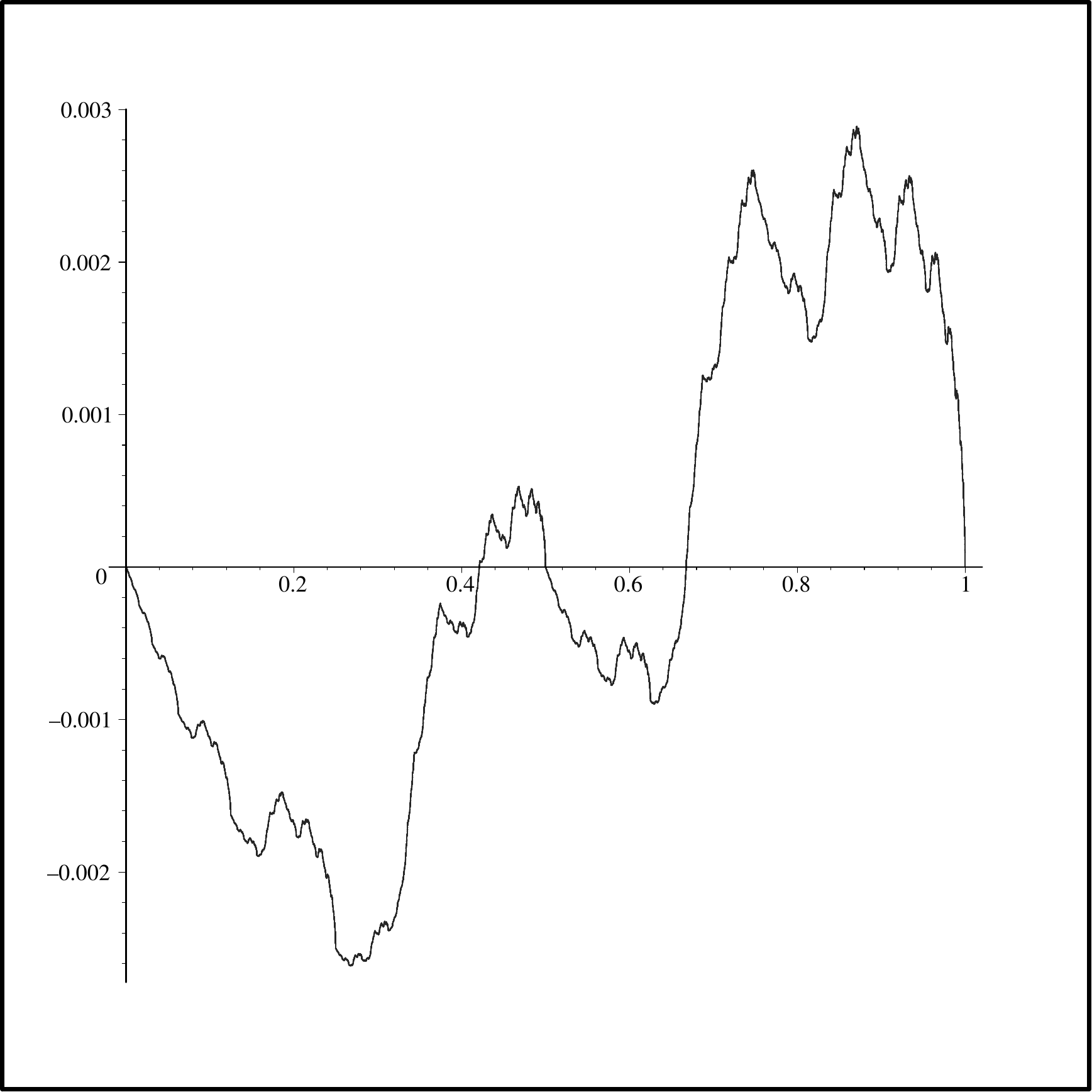}  
  \end{center}
\caption{The difference between the first coordinates of the dilation equation solutions (left) and the ghost measures (right) of the $2$-regular Salem sequence with $(b_0,b_1)=(2,3)$ and the Zaremba sequence $z_2$ (Salem minus Zaremba).}
\label{fig:SminusZ}
\end{figure}

\bibliographystyle{amsplain}
\providecommand{\bysame}{\leavevmode\hbox to3em{\hrulefill}\thinspace}
\providecommand{\MR}{\relax\ifhmode\unskip\space\fi MR }
\providecommand{\MRhref}[2]{%
  \href{http://www.ams.org/mathscinet-getitem?mr=#1}{#2}
}
\providecommand{\href}[2]{#2}


\end{document}